\documentclass[final,onefignum,onetabnum]{siamart220329}



\usepackage{lipsum}
\usepackage{amsfonts}
\usepackage{graphicx}
\usepackage{epstopdf}
\usepackage{algpseudocode }
\usepackage{booktabs}
\usepackage{amsmath,amssymb}
\usepackage{mathtools}
\usepackage{stmaryrd}
\usepackage{pgfplots}
\usepgfplotslibrary{colorbrewer, groupplots}
\usetikzlibrary{pgfplots.statistics, pgfplots.colorbrewer, external}
\usepackage{pgfplotstable}
\usepackage{filecontents}
\usepackage{longtable}
\usepackage{makecell}

\ifpdf
  \DeclareGraphicsExtensions{.eps,.pdf,.png,.jpg}
\else
  \DeclareGraphicsExtensions{.eps}
\fi

   
\newcommand{\real}{\mathbb{R}}   
\newcommand{\ex}{\mathbb{E}}   
\newcommand{\pr}{\mathbb{P}}     
\newcommand{\mat}[1]{\boldsymbol{#1}}   
 
\newcommand{\norm}[1]{\left\| {#1} \right\|}

\newcommand{\blkmat}[1]{\begin{bmatrix} #1 \end{bmatrix}}
\newcommand{\twotwo}[4]{\blkmat{#1 & #2 \\ #3 & #4}}
\newcommand{\twoone}[2]{\blkmat{#1 \\ #2}}
\newcommand{\onetwo}[2]{\blkmat{#1 & #2}}
\newcommand{\lowrank}[2]{\left\llbracket #1 \right\rrbracket_{#2}}
\newcommand{\icur}{\cref{alg:ICUR}}
\newcommand{\icurl}{IterativeCUR-LUPP}

\newcommand{\curs}{s-LUPP}
\newcommand{\svds}{SVDSketch}

\newcommand{\fopa}{.3}
\newcommand{\dopa}{1}
\newcommand{\boxplotscale}{.7}

\definecolor{cSVDs}{RGB}{141, 8, 1}
\definecolor{cLU}{RGB}{227, 178, 60}
\definecolor{cIQR}{RGB}{0, 20, 83}
\definecolor{cILU}{RGB}{12, 155, 107}
\definecolor{cOS}{RGB}{251, 97, 7}
\definecolor{cSQR}{RGB}{67, 97, 238}
\definecolor{cSOS}{RGB}{247, 37, 133}
\definecolor{cSVD}{RGB}{27, 99, 110}
\definecolor{cminus}{RGB}{145, 0, 0}
\definecolor{cpos}{RGB}{79, 120, 59}
\definecolor{c5}{RGB}{191,67,66}
\definecolor{c50}{RGB}{231, 215, 193}
\definecolor{c100}{RGB}{167, 138, 127}
\definecolor{c500}{RGB}{115, 87, 81}

\pgfplotsset{
    numformat single/.style={pgf/number format/#1},
    numformat/.style={numformat single/.list={#1}},
    every axis/.append style={
         width = .5 \textwidth,
        height = .4 \textwidth,
        every tick/.style = {xtick pos = left},
        title style={font=\bfseries},
        label style={font=\small},
        ticklabel style={font=\footnotesize},
        legend style={font=\footnotesize},
        scaled y ticks = false,
    },
    every axis plot/.append style={
    every mark/.append style={solid,  fill opacity = .7, draw opacity = 1},
    scur_line/.style = {mark=triangle, mark size = 3pt, color = cLU, dashed},
    icur_line/.style={mark=square, mark size = 2pt, color = cILU, densely dotted},
    svds_line/.style={mark=o, mark size = 2pt, color = cSVDs},
    svd_line/.style={mark=pentagon, mark size = 3pt, color = cSVD, dash dot dot},
    qr_line/.style={mark=diamond, mark size = 3pt, color = cIQR},
    os_line/.style={mark=star, mark size = 3pt, color = cOS, dash dot},
    sos_line/.style={mark=halfcircle, mark size = 1.5pt, color = cSOS},
    sqr_line/.style={mark=-, mark size=3pt, color = cSQR},
    bs5_line/.style={mark=pentagon, mark size = 3pt, color = cSVD, dash dot dot},
    bs50_line/.style={mark=*, mark size = 2pt, color = c500},
    bs100_line/.style={mark=diamond, mark size = 3pt, color = orange, dashed},
    bs500_line/.style={mark=triangle, mark size = 3pt, color = black, densely dotted},
    }
}

\newsiamremark{remark}{Remark}
\crefname{remark}{Remark}{Remarks}
\newsiamremark{hypothesis}{Hypothesis}
\crefname{hypothesis}{Hypothesis}{Hypotheses}
\newsiamthm{example}{Example}
\crefname{example}{example}{examples}
\newsiamthm{claim}{Claim}
\crefname{claim}{Claim}{Claims}
\headers{Iterative CUR}{N. Pritchard, T. Park, Y. Nakatsukasa, and P.G. Martinsson}

\title{ Fast  Rank Adaptive CUR via a Recycled Small Sketch\thanks{
\funding{The work reported was supported by the Office of Naval Research (N00014-
18-1-2354), by the National Science Foundation (DMS-2313434),
and by the Department of Energy ASCR (DE-SC0025312). For the purpose of open access, the authors have applied a CC BY public copyright license to any author accepted manuscript arising from this submission.}}}

\author{Nathaniel Pritchard\thanks{Mathematical Institute at the University of Oxford.}
\and Taejun Park\thanks{Institute of Mathematics at EPF Lausanne.}
\and Yuji Nakatsukasa\footnotemark[2]
\and Per-Gunnar Martinsson\thanks{Department of Mathematics and the Oden Institute at the University of Texas at Austin.} 
}

\usepackage{amsopn}

\makeatletter
\newcommand*{\addFileDependency}[1]{
  \typeout{(#1)}
  \@addtofilelist{#1}
  \IfFileExists{#1}{}{\typeout{No file #1.}}
}
\makeatother



\ifpdf
\hypersetup{
  pdftitle={IterativeCUR},
  pdfauthor={Pritchard, Park, Nakatsukasa, Martinsson}
}
\fi



\usepackage{color}

\usepackage[normalem]{ulem}

\newcommand{\ignore}[1]{}

\begin{document}

\maketitle

\begin{abstract}
  The computation of accurate low-rank matrix approximations is central to improving the scalability of various techniques in machine learning, uncertainty quantification, and control. Traditionally, low-rank approximations are constructed using SVD-based approaches such as truncated SVD or RandomizedSVD. Although these SVD approaches---especially RandomizedSVD---have proven to be very computationally efficient, other low-rank approximation methods can offer even greater performance. One such approach is the CUR decomposition, which forms a low-rank approximation using direct row and column subsets of a matrix. Because CUR uses direct matrix subsets, it is also often better able to preserve native matrix structures like sparsity or non-negativity than SVD-based approaches and can facilitate data interpretation in many contexts. This paper introduces IterativeCUR, which draws on previous work in randomized numerical linear algebra to build a new algorithm that is highly competitive compared to prior work:
(1) It is adaptive in the sense that it takes as an input parameter the desired tolerance, rather than an a priori guess of the numerical rank.
(2) It typically runs significantly faster than both existing CUR algorithms and techniques such as RandomizedSVD, in particular when these methods are run in an adaptive rank mode. Its asymptotic complexity is  $\mathcal{O}(mn + (m+n)r^2 + r^3)$ for an $m\times n$ matrix of numerical rank $r$.
(3) It relies on a single small sketch from the matrix that is successively downdated as the algorithm proceeds.

We demonstrate through extensive experiments that IterativeCUR achieves up to $4\times$ speed-up over state-of-the-art pivoting-on-sketch approaches with no loss of accuracy, and up to $40\times$ speed-up over rank-adaptive randomized SVD approaches.
\end{abstract}

\begin{keywords}
  CUR, index selection, low-rank approximation, rank-adaptive
\end{keywords}

\begin{AMS}
  	65F55 , 15A23, 68W20
\end{AMS}
\section{Introduction}\label{sec:int}
For decades, low-rank approximations have been essential for the scalability improvements achieved in a variety of scientific computing fields, 
 ranging from mass spectrometry \cite{yang2015identifying}, surrogate modeling \cite{kramer2024learning,zheng2025semilagrangian}, genetics \cite{mahoney2009cur}, and deep learning \cite{flynn2024stat, mai2020vgg, park2025curing}.  In these fields, practitioners often form low-rank approximations using optimal SVD-based  approaches such as the truncated SVD \cite{eckart1936approximation, mirsky1960symmetric} and the RandomizedSVD \cite{halko2011finding}.  However, while some SVD-based methods are very computationally efficient, it turns out that performance can be further improved by switching to alternative low-rank approximation methodologies such as the CUR decomposition, which has the additional advantage that it facilitates data interpretation \cite{mahoney2009cur}. 

The CUR decomposition \cite{goreinov1997theory, mahoney2009cur} aims to find a rank-$r$ approximation to a matrix $\mat A \in \real^{m \times n}$ by choosing columns of $\mat A$, $\mat C \in \real^{m \times r}$, rows of $\mat A$, $\mat R \in \real^{r \times n}$, and a 
core matrix $\mat U \in \real^{r \times r}$ such that 
\begin{equation}
    \mat A \approx \mat C \mat U \mat R.
\end{equation}

The CUR decomposition offers superior scalability and better preservation of sparsity and non-negativity compared to SVD-based methods because it works directly with selected subsets of the original matrix's rows and columns. However, this advantage comes with a trade-off: CUR approximations typically achieve somewhat lower accuracy than their SVD counterparts. This accuracy limitation stems from the constraint that the $\mat C$ and $\mat R$ matrices must consist of actual column and row subsets from the original matrix, preventing CUR from reaching the optimal accuracy that a truncated SVD of the same rank can achieve.
Nonetheless, incredibly, the CUR factorization will be at worst within a factor of $r+1$ away from the optimal SVD-based rank-$r$ approximation in the Frobenius norm, provided that optimal rows and columns are chosen~\cite{cortinovis2020lowrank,deshpande2006no,osinsky2025close}\footnote{This is for the "efficient" choice of $\mat U$, 
$\mat{U}=\mat{A}(I,J)^{\dagger}$, sometimes called CUR cross approximation~\cite{park2025accuracy}. 
One can improve the suboptimality factor to $\sqrt{2(r+1)}$~\cite{cortinovis2020lowrank} with $\mat{U}=\mat{C}^\dagger \mat{AR}^\dagger$ (CUR best approximation), which minimizes the Frobenius norm error; but computing this is expensive, and we make little use of it in this paper. }.

Selecting rows and columns that \textit{optimally} minimize the Frobenius norm of the error is NP-Hard \cite{shitov2021column, civril2009selecting}. Instead, most CUR index selection techniques aim to guarantee high-quality approximations to a matrix. Broadly, these selection techniques can be categorized as sampling techniques, pivoting techniques, or pivoting on a subspace techniques. Sampling techniques select indices by sampling from fixed distributions (e.g., Uniform, Leverage Score, DPP) over the row and column indices \cite{derezinski2021determinantal, frieze2004fast, mahoney2009cur}.  Pivoting techniques select indices by applying a pivoting procedure directly to the matrix (e.g., LU with partial pivoting and QR with column pivoting) \cite{stewart1998matrix}. Pivoting on subspace techniques work by applying a pivoting technique to a subspace of the matrix, typically obtained via the truncated SVD or through matrix sketching. These techniques offer some of the best approximation performance and computational efficiency in practice, but are often more difficult to analyze theoretically \cite{dong2023simpler}. Examples of pivoting on a subspace techniques include pivoting on a sketch techniques such as sketched LU with partial pivoting \cite{dong2023simpler}, DEIM \cite{sorensen2016deim}, Osinsky's selection approach \cite{osinsky2025close} as well as many others \cite{chen2020efficient, cortinovis2024adaptive,duersch2020randomized, osinsky2025close}. Beyond these categories, there are many hybrid approaches, which often involve choosing pivots via sampling that include techniques such as robust block-wise random pivoting \cite{dong2024robust} or RP-Cholesky \cite{chen2025randomly}. 

Typically, index-selection techniques require that the rank be known before selecting the indices \cite{chen2020efficient, cortinovis2024adaptive, dong2023simpler, duersch2020randomized, mahoney2009cur}.  Unfortunately, in many applications, practitioners do not have this knowledge \cite{kramer2024learning, park2025curing, yang2015identifying}. Instead, practitioners often prefer that the approximation, $\widehat{\mat A}$, satisfies a particular quality threshold, for example, the relative error, $\|\mat A - \widehat {\mat A}\|_F / \|\mat A\|_F < \epsilon$. Current CUR approaches can approximate this behavior using a rank estimation procedure such as the one proposed in \cite{meier2024fast} that uses estimates of singular values to determine a rank that corresponds to the desired quality threshold. Unfortunately, owing to the sub-optimality of the CUR for the best rank-$r$ approximation to a matrix, this approach does not guarantee that the returned CUR approximation will satisfy the desired quality threshold.

Guaranteeing quality requires a rank-adaptive CUR approach. Rank-adaptive approaches iteratively increase the rank of an approximation until a particular quality threshold is satisfied. As an example, a rank-adaptive variant of RandomizedSVD \cite{halko2011finding} known as SVDSketch \cite{yu2018efficient} iteratively updates the range approximator, $\mat Q$, with sketches of size $b$ of $\mat A$ until $\|\mat A\|_F^2 - \|\mat Q^\top\mat A\|_F^2$ is small enough. Beyond RandomizedSVD, several innovative and highly effective rank-adaptive approaches have been proposed for one-sided interpolatory decompositions, which are closely related to CUR but select \textit{either} rows \textit{or} columns (not both) \cite{dong2024robust, pearce2025adaptive}.

This work introduces a rank-adaptive CUR framework, IterativeCUR, that requires only $\mathcal{O}(r(m+n))$ memory to update the approximation with the same computational complexity, $\mathcal{O}(mn+((m+n)r^2))$, as other state-of-the-art CUR approaches \cite{dong2023simpler}. IterativeCUR forms a CUR approximation by selecting indices from a single recycled sketch of the residual, $\mat A - \mat C \mat U\mat R$, that holds $\mathcal{O}(1)$ vectors. This single sketch allows IterativeCUR to thrive in environments where matrix-vector operations are expensive, e.g., when the matrix $\mat A$ is very large and dense. The sketch of the residual also facilitates the creation of a risk-aware stopping criterion for IterativeCUR, which allows users to control the probability that the approximation returned by IterativeCUR violates their quality threshold. 
 In experiments, we demonstrate the effectiveness of IterativeCUR on a range of moderately sized real-world and synthetic matrices.  Specifically, we observed that IterativeCUR is four times faster than the state-of-the-art column selection procedure, Sketched LUPP \cite{dong2023simpler} (which is not rank-adaptive), while matching its accuracy. Additionally, compared to the rank-adaptive approach for the RandomizedSVD, SVDSketch, IterativeCUR is typically nearly as accurate and up to 40 times faster.  Experiments examining IterativeCUR revealed that high accuracy is maintained even for block sizes as small as five. However, for optimal computational speed, larger block sizes (around 100) perform better. The experiments further indicate that while the choice of index selection strategy does have an impact on accuracy, the differences are small. For more details on the experiments, see \cref{sec:exp}. 

Simply put, this paper introduces an efficient and practical method for computing CUR approximations that substantially outperforms RandomizedSVD. The approach achieves this efficiency through two key features: it requires remarkably few matrix-vector products with the full matrix (as few as five), and its rank-adaptive design automatically determines the optimal rank to achieve  $\epsilon$-approximations with  $\epsilon$ approaching machine precision. By contrast, SVDsketch is unable to obtain an accuracy higher than square root of the machine precision~\cite{yu2018efficient}.

In the next subsection, we introduce the relevant notation for the paper.  After this notation is introduced, the paper will proceed in the following manner. 
In \cref{sec:pre}, we present background relating to the CUR decomposition and random embeddings that is relevant for understanding IterativeCUR.
In \cref{sec:alg}, we present the intuition, describe the specifics, and explore the computational complexity of IterativeCUR. 
In \cref{sec:the}, we present an upper bound on the accuracy of IterativeCUR.
In \cref{sec:exp}, we present extensive numerical experiments that demonstrate the efficiency of row and column selection compared to other fixed-rank approaches, the overall effectiveness of IterativeCUR when identifying an approximation for a specified error tolerance, and the impact of sketch size and index selection method on the performance of IterativeCUR. Finally, in \cref{sec:con}, we conclude with a summary of IterativeCUR's practical advantages and remaining theoretical challenges.

\subsection{Notation}\label{sec:not}
This paper will focus on techniques to construct a rank-$r$ approximation to a matrix $\mat A$. To refer to the $j^{\text{th}}$ column of $\mat A$ we use the notation $\mat A(:, j)$ and to refer to the $i^{\text{th}}$  row of $\mat A$ we use the notation $\mat A(i,:)$. 

Our approach will iteratively form a $\mat C \mat U \mat R$ approximation to $\mat A$ where $\mat C = \mat A(:, J)$, $\mat U = \mat A(I, J)^\dagger$, and $\mat R = \mat A(I, :)$. Here, $I$ is the set of all row indices selected after $k$ iterations, and $J$ represents the set of all column indices selected after $k$ iterations.  Each iteration improves the approximation of iteration $k-1$ by selecting $b=O(1)$ new row indices $I_k$ and column indices $J_k$. We will denote appending the new indices $I_k$ and $J_k$ to $I$ and $J$ with $I = [I, I_k]$ and $J = [J, J_k]$.

The true approximation quality of a CUR approximation can be determined by computing its residual, $\mat S = \mat A- \mat C \mat U \mat R$. In addition to this quantity, we also define two other residuals. The first is the column-residual, $\mat S_{k}^{\rm col}$, which applies a fixed sketching matrix $\mat G \in \real^{c \times m}$, $c = \lfloor 1.1b \rfloor$ where $\lfloor \cdot \rfloor$ is the floor function to the residual to obtain $\mat S_{k}^{\rm col} = \mat G \mat A - \mat G \mat C \mat U \mat R$. The second quantity is the row-residual, $\mat S_{k}^{\rm col}$,  which is the residual at the column indices $J_k$. Specifically, $\mat S_{k}^{\rm row} = \mat A(:, J_k) - \mat C \mat U \mat R(:, J_k)$. We can then use the column residual to measure the quality of a CUR approximation $\rho_k = \|\mat S_{k}^{\rm col}\|_F/\|\mat A\|_F$. We can also determine that a solution is good enough when $\rho_k < \epsilon$, where $\epsilon$ is a user-selected criterion for approximation quality.

\section{Preliminaries}\label{sec:pre}

The following section will present the background necessary to describe our proposed method for forming CUR approximations. We will begin by developing an understanding of the power of random vectors for revealing dominant singular subspaces through a discussion of the random subspace embeddings and the Randomized Rangefinder. With an understanding of the power of random sketching, we introduce important ideas relating to CUR approximations.

\subsection{Subspace Embeddings and the Randomized Rangefinder}
A powerful tool in the field of Randomized Numerical Linear Algebra is the random subspace embedding. We define a random subspace embedding as 
\begin{definition}\label{def:subspace}
    For a given matrix $\mat A \in \real^{m \times n}$, distortion parameter $\delta \in (0,1)$, and probability $\gamma \in (0,1)$, a random linear map $\mat G: \real^{m} \rightarrow \real^b$ is considered a {random subspace embedding with distortion $\delta$} if for any $x$
    \begin{equation}
        \pr \left((1-\delta) \|\mat Ax\|_2 \leq \|\mat G\mat Ax\|_2 \leq (1+\delta)\|\mat Ax\|_2\right) \geq 1- \gamma. 
    \end{equation}
\end{definition}
Random linear maps that satisfy \cref{def:subspace} for large enough $b$ include Gaussian matrices \cite{indyk1998approximate}, sub-sampled random trigonometric transforms \cite{ailon2009fast,tropp2011improved,woolfe2008fast}, and the sparse sign matrix \cite{cohen2016nearly, meng2013lowdistortion}. Because subspace embeddings approximately preserve the norms of matrices, they allow
linear algebra operations to perform on a substantially lower-dimensional space. The reductions in computations afforded by operating on this lower-dimensional space have been central to the performance improvements offered by Randomized Numerical Linear Algebra approaches.

The Randomized Rangefinder \cite{halko2011finding} is one technique that uses random linear maps with the subspace embedding property (\cref{def:subspace}) to form accurate low-rank approximations to the range of a matrix $\mat A$. Remarkably, the Randomized Rangefinder can accurately form these low-rank approximations even if the embedding dimension, $b$, is not large enough for the chosen random linear map to satisfy \cref{def:subspace}.  Specifically, noting that \cite{eckart1936approximation} shows that the optimal range approximator to a matrix $\mat A$ is given by the $r$ right singular vectors, $\mat V_r$, of a matrix, $\mat A$, corresponding to the largest singular values of $\mat A$ and has an error $\|\mat A - \mat A\mat V_r \mat V_r^\top\|_F^2 = \sum_{i=r+1}^{\min(m,n)}\sigma_i(\mat A)^2$, \cite{halko2011finding} shows that if $\mat X = \mat G \mat A$ with $\mat{G} \in \real^{b \times m}$ 
then 
    \begin{equation}\label{thm:rangefinder}
     \ex \left[ \|\mat A - \mat A \mat{X}^\dagger \mat{X}\|_F^2 \right] \leq \left(1+\frac{r}{b-r-1}\right) \|\mat A - \mat A\mat V_r \mat V_r^\top\|_F^2.
    \end{equation} 

This result demonstrates that for $b$ slightly larger than $r$, multiplying $b$-Gaussian vectors with a matrix $\mat A$ is remarkably effective at approximating the dominant $r$-dimensional space. As noted in the original work, the quality of this approximation depends on the decay of the singular values \cite{halko2011finding}. In particular, when the singular values decay slowly, the bound is tight. Conversely, when the singular values decay quickly, the approximation can be significantly more accurate than \cref{thm:rangefinder} would indicate. 

\subsection{The CUR Decomposition}

The CUR decomposition is a low-rank approximation technique that approximates the matrix $\mat{ A} \in \real^{m \times n}$ by selecting subsets of the rows and columns of $\mat A $. In notation, we represent the CUR decomposition as selecting row indices $I$ and column indices $J$ and then forming the decomposition  
\begin{equation}
    \mat A \approx  \mat C \mat U \mat R,
\end{equation}
where $\mat C = \mat A(:, J)$, $\mat R = \mat A(I,:)$, and $\mat{U}$ is a small weight matrix. The theoretically optimal choice for the weight matrix is $\mat U = \mat C^\dagger \mat A \mat R^\dagger$, but this can be expensive to form. A more economical choice that often provides comparable accuracy is the  ``cross approximation'', $\mat U = \mat A(I, J)^\dagger$. We note that when the cross-approximation is used, it is often helpful to select indices dependently for purposes of numerical stability. That is, first select rows (or columns), and then select columns (or rows) based on the information in the chosen rows (columns). See \cite{park2025accuracy} for a full discussion. 

Aside from the index selection approaches discussed in \cref{sec:int}, another important consideration for CUR methods is rank-adaptivity. Unlike standard techniques that require a priori knowledge of the rank, rank-adaptive techniques iteratively increase the rank of an approximation until the approximation satisfies a specified error tolerance, $\epsilon$. For rank-adaptive techniques to be effective, the error of the approximation must be at least cheap to estimate. For example, in the case of RandomizedSVD an effective rank-adaptive approach relies on the observation that the squared Frobenius norm of the error is the difference between the squared Frobenius norm of the original matrix and the squared Frobenius norm of its approximation \cite{yu2018efficient}. In the case of CUR, no such equivalence exists, and thus rank-adaptivity is typically a harder goal to achieve. 

Progress has been made in developing rank-adaptive approaches for interpolative decompositions, which can be thought of as one-sided CUR decompositions where only row or column indices are selected. Rank-adaptive interpolative approaches include the sketched LU approach of Pearce et al. \cite{pearce2025adaptive} that relies on the sketched residual to attain rank-adaptivity in a similar manner to IterativeCUR and a closely related hybrid sampling approach known as Robust Blockwise Randomized Pivoting of Dong et al. \cite{dong2024robust}. Unfortunately, neither approach easily generalizes to CUR because of CUR's need for selecting both rows and columns dependently at each iteration. 

\section{IterativeCUR}\label{sec:alg}
 With an understanding of the ideas and techniques underlying the CUR decomposition, we now lay out the intuition and details for IterativeCUR. This discussion will first overview the IterativeCUR algorithm (\icur). After overviewing IterativeCUR, we will discuss important aspects of IterativeCUR. In particular, we will discuss how IterativeCUR selects row and column indices, which will reveal how IterativeCUR  iteratively constructs the CUR decomposition. With this intuition for constructing the decomposition, we will then discuss stopping IterativeCUR via a computationally efficient a posteriori estimator and stopping criterion. We then finish this section by totaling the computational complexity of IterativeCUR.

\subsection{Overview of IterativeCUR}
When the rank of a matrix is unknown, we usually want to form a CUR approximation that satisfies $\|\mat A - \mat C \mat U \mat R\|_2 / \|\mat A\|_2 < \epsilon$. A na\"ive approach for creating an $\epsilon$-accurate CUR approximation selects a set of $b$ column indices, $J$, a set of $b$ row indices, $I$, and computes the residual of the CUR decomposition,
\begin{equation}
    \mat S = \mat A - \mat C \mat U \mat R.
\end{equation}
From this residual, one then selects $b$ additional column indices, $J_+$, and $b$ additional row indices, $I_+$. These indices are then appended to the previously selected sets so that $J = [J, J_+]$ and $I = [I, I_+]$ after which a new residual is computed. This process is repeated until $\|\mat A -\mat C \mat U \mat R\|_F/ \|\mat A\|_F < \epsilon$. Although this na\"ive approach can be performed for small matrices, for large matrices, this na\"ive approach becomes computationally impractical owing to the $\mathcal{O}(mnr)$ computational complexity and the expense of having to access the full matrix $\mat A$ at each iteration.

At a high level, IterativeCUR (\cref{alg:ICUR}) performs the na\"ive approach, using a single sketch of $\mat S$, $\mat G \mat S \in \real^{c \times n}$ with $m \gg c > b$ and a block subset of the columns of $\mat S$. Specifically, at each iteration, IterativeCUR selects a new block of column indices, $J_+$, by applying an index selection procedure, such as partially pivoted LU (LUPP), to $\mat G \mat S$. Then once $J_+$, is selected, ItereativeCUR selects a new set of row indices, $I_+$, by applying an index selection procedure to the matrix $\mat S(:, J_+)$.  Once the new row and column indices are selected, IterativeCUR updates the matrices $\mat C, \mat U, \mat R$, by computing a new residual $\mat G\mat S$, and repeating this procedure until $\|\mat G \mat S\|_F / \| \mat G \mat A\|_F < \epsilon$. 

In total, for a dense matrix, IterativeCUR forms an $\epsilon$-accurate approximation to a matrix $\mat A$ in $\mathcal{O}(mn + (m+n)r^2 + r^3)$ operations where $r$ is the rank of the $\epsilon$-accurate approximation.
This computational complexity is comparable to other efficient rank-adaptive randomized SVD algorithms, e.g. SVDsketch \cite{yu2018efficient}. However, unlike SVDSketch, which uses $\|\mat A - \widehat{\mat{A}}\|_F^2$ to make stopping decisions and can only provide approximations up to $10^{-8}$ when using double precision arithmetic, IterativeCUR makes stopping decisions using $\|\mat{GA} - \mat{GCUR}\|_F$ allowing it to generate approximations that are accurate to near machine precision.

\begin{algorithm}[h]
\caption{Iterative CUR}
        \hspace*{\algorithmicindent } \textbf{Input:}  \\ \hspace*{\algorithmicindent }  \quad $\epsilon > 0$ \Comment{The desired quality of $\|\mat A - \mat C \mat U \mat R\|/\|\mat A\|_F$}\\  \hspace*{\algorithmicindent } \quad $b > 0$ \Comment{ Block/sketch size; usually $5\leq b\leq 250$
        }\\  \hspace*{\algorithmicindent} \quad $\textbf{Column Selection}$ \Comment{LUPP as default could use QRCP or Osinsky \cite{osinsky2025close}} \\ \hspace*{\algorithmicindent} \quad $\textbf{Row Selection}$ \Comment{LUPP as default could use QRCP or Osinsky \cite{osinsky2025close}}\\
        \hspace*{\algorithmicindent} \textbf{Output:} $\mat C$, $\mat U$, $\mat R$
    \label{alg:ICUR}
    \begin{algorithmic}[1]
        \Procedure{IterativeCUR}{$b$, $\epsilon$, \textbf{Column Selection}, \textbf{Row Selection}}
        \State Generate a sketch matrix $ \mat G \in \mathbb{R}^{\lfloor 1.1b \rfloor \times n}$
        \State $J = \emptyset$, $I = \emptyset$, $\mat U = I$, $\mat C = \emptyset$, $\mat R=\emptyset$
        \State $ \mat S^{\rm col}_0 = \mat G \mat A$ \label{alg:initial-sketch}
        \State $\rho_0 = \|\mat S^{\rm col}_0\|_F / \|\mat G \mat A\|_F$ 
        \State $k = 0$
         \While{$\rho > \epsilon$}
           \State $J_k \leftarrow \textbf{Column Selection} (\mat S^{\rm col}_k)$
           \State $\mat  S^{\rm row}_k = \mat A(:,J_k) - \mat C_k \mat U_k \mat R_k(:, J_k)$ \label{alg:right-res} \Comment{Compute row residual}
           \State $J \leftarrow [J, J_k]$
           \State $I_k \leftarrow \textbf{Row Selection}(\mat S^{\rm row}_k)$
           \State $I \leftarrow [I, I_k]$
           \State $\mat U_k \leftarrow \mat A(I,J)^\dagger$ \Comment{Default QR, improved efficiency with LU}  
           \State $\mat C_k \leftarrow \mat A(:,J)$,  $\mat R_k \leftarrow \mat A(I,:)$ \label{alg:update}
           \State $\mat S^{\rm col}_k \leftarrow \mat G \mat A -\mat G \mat C_k \mat U_k \mat R_k$ \Comment{Use $\mat {GC}_k$ from $\mat{GA}$}
           \label{alg:left-res}
           \State $\rho_k \leftarrow \|\mat  S^{\rm col}_k\|_F /\|\mat G\mat A\|_F$\label{alg:left-res-norm} 
           \State $k \leftarrow k+1$
        \EndWhile   
    \EndProcedure
    \end{algorithmic}
\end{algorithm}

\subsection{Index Selection}
With an overview of IterativeCUR, we now examine the details of the algorithm. We begin with its approach to selecting row and column indices. For this discussion, we rely on the observation by \cite{park2025accuracy} that when using the cross-approximation form of $\mat U$, $\mat A(I,J)^\dagger$, row and column indices should be selected in a dependent manner to ensure the numerical stability of the approximation. 
That is, if we first select a set of column indices, $J$, we should choose row indices using only the selected columns, i.e. $\mat A(:,J)$, or we risk the core matrix being poorly conditioned. For example, when performing CUR on the matrix 
\begin{equation*}
    A = 
    \begin{bmatrix}
    0 & \frac{1}{\sqrt{n}} & \dots & \frac{1}{\sqrt{n}} \\
        \frac{1}{\sqrt{n}}+\eta & 0 & \dots & 0
    \end{bmatrix} \in \real^{2\times (n+1)}
\end{equation*}
we can apply QR with Column Pivoting (QRCP) to the rows and columns independently to form a rank-one approximation. If $\eta + 1 / \sqrt{n} < 1$, selecting rows and columns independently will result in us selecting the first row and first column of the matrix, meaning the cross approximation core will be $1/0$. On the other hand, if instead we were to select entries dependently by selecting columns and then rows, we would select the first column and second row, resulting in $1 / (\eta + 1/\sqrt{n})$ being the core matrix.

With an understanding of the need for dependent index selection, in the remainder of this subsection, we present intuition for how to select column indices using $\mathcal{O}(bn)$ memory. Then we present how to use the selected columns to select row indices using $\mathcal{O}(bm)$ memory.

\subsubsection{Column Selection}
To ensure that selecting column indices at each iteration of IterativeCUR incurs the same, small, constant computational and storage cost, we select column indices from the sketched residual matrix, $\mat G(\mat A- \mat C\mat U\mat R)$ where $\mat G \in \real^{c \times m}$, $m \gg c \geq b$, is a random matrix with Gaussian entries. Authors have repeatably shown that selecting indices from such a sketch of a matrix is highly effective \cite{chen2020efficient, duersch2020randomized, dong2023simpler,  martinsson2017householder, melgaard2015gaussian, pearce2025adaptive}. Typically, these authors select these indices using QR with column pivoting (QRCP) or LU with partial pivoting (LUPP).\footnote{There is no reason in principle why one could not also use sampling techniques such as leverage score sampling to perform the selection \cite{mahoney2009cur}.} Interestingly, applying LUPP or QRCP to a sketch of $\mat A$ typically results in better index selection than applying either technique to $\mat A$ directly \cite{dong2024robust, duersch2020randomized}. 

The success of pivoting on a sketch of $\mat A$ over pivoting directly on $\mat A$ may at first seem surprising. However, the success becomes clearer when we consider that, as \cref{thm:rangefinder} demonstrates, sketching $\mat A$ results in a representation that approximately captures the dominant $b$-dimensional subspace of $\mat A$. Therefore, when we pivot on a sketch of $\mat A$, we select columns that best align with an approximation to the subspace most important for selecting the indices that lead to the most accurate approximation of $\mat A$ \cite{cortinovis2024adaptive, osinsky2025close}. In this way, approaches that pivot on a sketch behave more similarly to approaches that use the true best $b$-dimensional approximation to $\mat A$ like DEIM \cite{chaturantabut2010nonlinear,sorensen2016deim} or Osinsky \cite{osinsky2025close}.

Considering that much like \cite{pearce2025adaptive}, IterativeCUR operates on the residual of the previous iteration's CUR approximation, sketching at each iteration reveals an approximation to the best $b$-dimensional subspace of that residual matrix.  This means that when IterativeCUR uses a pivoting approach to select indices from the sketched residual, the selected indices will closely align with the dominant directions left unexplained by the previous iteration's CUR approximation.

This intuition of index selection leads to two distinct implications for the nature of IterativeCUR. First, because IterativeCUR makes selection decisions with only $b$-dimensional information rather than the ideal $\min\{m,n\}$-dimensional information, IterativeCUR is a greedy algorithm.  Second, by selecting the indices that most align with the dominant directions of the residual, IterativeCUR is able to select indices that correct for a previous iteration's poorly selected indices, if any. In other words, IterativeCUR is a self-correcting algorithm. As we will see in \cref{sec:exp}, these two implied behaviors mean that IterativeCUR is aware of the mistakes caused by its greed and able to correct for them, which leads it to exhibit what we term a \textit{conscientiously greedy} behavior. 

Aside from the selection method, another important aspect of IterativeCUR's column selection is the reuse of a single $\mathcal{O}(1)$ sketch. Computationally, the benefits of reusing a sketch are obvious: we only need to access the full matrix once for $b$ matrix-vector products, in line 3 of \cref{alg:ICUR}.  At the same time, the potential drawback is that the dependency between the sketches could lead to a systematic avoidance of important vectors. Fortunately, other authors who have considered reusing a sketch in pivoting on a sketch procedures such as \cite{chen2020efficient, martinsson2017householder} have not observed this to be the case. This is remarkable when we note that the residual of RandomizedSVD $\mat{A}-\mat{AVV}^T$ is orthogonal to the sketch, i.e., $\mat{G}(\mat{A}-\mat{AVV}^T)=\mat{0}$, by construction. Here, $(\mat{GA})^T=\mat{VR}$ is the economical QR factorization. 

To see why dependency is not nearly as problematic as we might expect in CUR, we consider the following contrived example.

\begin{example}
    Let $\mat A = \begin{bmatrix}
        c_1 v_1 & c_2 v_2 & c_3 v_3
    \end{bmatrix} \in \real^{3\times 3}$ where $c_1 > c_2> c_3 > 0$ and $v_1,v_2,v_3$ orthonormal. Additionally, assume that we generate a single sketching vector $g = d_2 v_2 + d_3 v_3$, where $d_2 > d_3 >0$. Finally, for simplicity, assume that $v_2(2) > v_2(1) >v_2(3) >0$ and that $v_1(2) = v_3(2) > 0$. Now, let us consider using IterativeCUR to form a rank-2 approximation of the matrix $\mat A$. It should be clear that the best approximation includes the first and second columns of $\mat A$. We now see what happens when we apply IterativeCUR reusing the sketching vector $g$, which is pathological in that it is orthogonal to the first column of $\mat A$.
    \vspace{1em}

    Iteration 1: Applying the sketching vector $g$ to $\mat A$ we obtain:
    \begin{equation}
        g^\top \mat A = \begin{bmatrix}
            0 & c_2 d_2 & c_3 d_3
        \end{bmatrix}.
    \end{equation}
    Taking the norms of the columns of $g^\top \mat A$, it is obvious based on the relationships between $c_2,d_2$ and $c_3, d_3$ that we would select the second column. Using our knowledge about the entries of $v_2$ we know that the second entry has the largest row and thus we select the second row. 
    \vspace{1em}

    Iteration 2:
    Selecting the second column and second row of $\mat A$ means that the residual after the first iteration is:
    \begin{align}
        g^\top & (\mat A - \mat C \mat U \mat R) \\
        &= g^\top \left(\begin{bmatrix}
        c_1 v_1 & c_2 v_2 & c_3 v_3
        \end{bmatrix} - \frac{c_2 v_2}{c_2 v_2(2)}  \begin{bmatrix}
            c_1v_1(2) & c_2v_2(2) & c_3v_3(2)
        \end{bmatrix}\right)\\
        &=\begin{bmatrix}
            -\frac{c_1 d_2}{v_2(2)} v_1(2) & 0& c_3 d_3 - \frac{c_3 d_2}{v_2(2)} v_3(2)
        \end{bmatrix}
    \end{align}
    Using the constant relations defined at the beginning of the example, we select the first column and obtain the optimal rank-2 approximation of the matrix $\mat A$. 
\end{example}

In this example, we see how selecting from the residual mitigates the dependency of index selection that comes from reusing a single sketching vector. Superficially, this example's sketching vector is bad because it has no components aligned with the most important vector. However, using the residual mitigates much of the badness of the reused sketch because the sketched residual punishes columns most aligned with the second column and rewards those that are both large and not aligned with the second column. This results in IterativeCUR still selecting the first column in the second iteration, as we would hope. Of course, this mitigation is not perfect. If we were to allow $v_1(2) = v_3(2) = 0$, then we would select incorrect indices. However, when we consider that we typically use a sketch size greater than one and that a Gaussian vector is orthogonal to any particular vector with probability zero, then we can see how, provided that we are selecting enough indices, it is difficult to construct situations where,  even when reusing the sketch, IterativeCUR will select many bad indices. 

 \subsubsection{Row Selection}
 Because IterativeCUR uses the cross-approximation as its core matrix, it is important that entries at the intersection of the selected rows and selected columns form a non-singular matrix. Superficially, there are two ways we can ensure that we select rows that interact well with the previously selected columns. We can either choose the rows by pivoting on the residual at \textbf{all} previously selected columns or we can choose rows by looking at the residual at only \textbf{the most recently} selected columns. It turns out that, if we look carefully at the residual of the CUR decomposition, in the context of IterativeCUR, these choices are equivalent. First, if we let $I$ be the set of selected row indices and $J$ be the set of selected column indices. We define $I^c = \{k: k \in [1, \dots, m] \cap k \not\in I\}$ and $J^c = \{k: k \in [1, \dots, n] \cap k \not\in J\}$. We let $\mat C = [\mat A(I, J)^\top\,\, \mat A(I^c,J)^\top]^\top$, $\mat R = [\mat A(I, J)\,\, \mat A(I,J^c)]$, and $\mat U = \mat A(I,J)^\dagger$
 (which here we assume is square and nonsingular, as would almost always be the case in IterativeCUR), for an appropriately pivoted $\mat A$ the residual of a particular CUR, $\mat{S} = \mat{A} - \mat{C} \mat{U} \mat{R}$, is

\begin{align*}
    \mat{S} = \begin{bmatrix}
        \mat{A}(I,J) & \mat{A}(I, J^c)\\
        \mat{A}(I^c, J) & \mat{A}(I^c, J^c)
    \end{bmatrix} - 
    &\begin{bmatrix}
        \mat{A}(I, J)\\
        \mat{A}(I^c,J)
    \end{bmatrix}
    \begin{bmatrix}
        \mat{A}(I,J)
    \end{bmatrix}^{\dagger}
    \begin{bmatrix}
        \mat{A}(I,J) & \mat{A}(I,J^c)
    \end{bmatrix}\\ &= 
    \begin{bmatrix}
        \mat 0 & \mat 0\\
        \mat 0 &  \mat{A}(I^c, J^c) -  \mat{A}(I^c,J)  \mat{A}(I,J)^{-1} \mat{A}(I, J^c)
    \end{bmatrix}.
\end{align*}
Looking at the residual matrix $\mat S$, it is clear that the residual entries corresponding to previously selected rows, $I$, and columns, $J$, are $\mat 0$. Thus, for a new selected set of column indices $J_+$, the row indices that best align with the selected columns $J \cup J_+$ can only be determined using $\mat S(:,J\cup J_+)$. Since $\mat S(:,J) = \mat 0$, selecting rows from $\mat S(:,J\cup J_+)$ is equivalent to selecting rows from $\mat S(:, J_+)$. This reduction allows for row selections to be performed with similar computational complexities and memory costs to those incurred while selecting columns.

\subsection{Stopping IterativeCUR} \label{subsec:stopping}
With an understanding of how IterativeCUR updates its low-rank approximation, we now discuss how to cheaply determine when an $\epsilon$-accurate solution has been achieved. In a perfect world, we would be able to use the relative Frobenius norm of the residual as an estimator, $\|\mat A - \mat C \mat U \mat R\|_F / \|\mat A\|_F$. Of course, computing this quantity requires that we access the full $\mat A$ at each iteration, which is expensive. To avoid this computational expense, we use the Frobenius norm of the sketched residual as an estimate of this quantity (see line \ref{alg:left-res-norm}). 

 Although careful analysis by \cite{pearce2025adaptive} has shown that the norm of the sketched residual is an extremely accurate estimator, it is still reasonable to want to control the amount that the norm of the sketched residual deviates from the norm of the full residual. To come up with a criterion that provides the user with such control when stopping IterativeCUR, we rely on the probabilistic bound derived by \cite{gratton2018improved} when the sketching matrix is Gaussian. Specifically, the bound is 
\begin{theorem}[Theorem 3.1 \cite{gratton2018improved}] \label{thm:gratton-bounds}
    Let $\mat A \in \real^{m \times n}$ be a matrix of rank $r$ whose Frobenius norm is to be estimated, and let $\mat G \in \real^{c \times m}$ be a matrix with independent mean zero variance $1 / c$ Gaussian entries. Then for any $\tau > 1$
    \begin{equation}
        \pr\left[ \|\mat G \mat A\|_F \leq (1/\tau) \|\mat A\|_F\right] \leq \exp\left( -\frac{c(\tau ^2- 1)^2}{4 \tau^4}\right).
    \end{equation}
\end{theorem}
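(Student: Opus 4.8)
The plan is to recognize the left‑hand side as the lower tail of a weighted sum of independent chi‑squared variables and control it with an exponential Markov (Chernoff) argument. First I would diagonalize $\mat A$: write its thin SVD $\mat A = \mat U \mat\Sigma \mat V^\top$ with $\mat U\in\real^{m\times r}$, $\mat\Sigma=\diag(\sigma_1,\dots,\sigma_r)$, and $\mat V\in\real^{n\times r}$, and use orthogonal invariance of the Gaussian law to note that $\mat G\mat U$ has the same distribution as a $c\times r$ matrix with i.i.d.\ $\mathcal N(0,1/c)$ entries (each column of $\mat G\mat U$ is $\mathcal N(0,\tfrac1c\mat I_c)$, and columns indexed by distinct singular directions are uncorrelated, hence independent). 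Since $\mat V^\top\mat V=\mat I_r$, right multiplication by $\mat V^\top$ preserves the Frobenius norm, so
\[
\|\mat G\mat A\|_F^2 \;=\; \|\mat G\mat U\mat\Sigma\|_F^2 \;=\; \sum_{i=1}^r \sigma_i^2 Z_i, \qquad Z_i := \|(\mat G\mat U)(:,i)\|_2^2,
\]
where $c Z_i \sim \chi^2_c$ are independent and $\ex[Z_i]=1$; in particular $\ex\bigl[\|\mat G\mat A\|_F^2\bigr] = \sum_i \sigma_i^2 = \|\mat A\|_F^2 =: W$.

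Next, the event in question is $\bigl\{\sum_i \sigma_i^2 Z_i \le W/\tau^2\bigr\}$, and for any $\theta>0$, applying Markov's inequality to $e^{-\theta\sum_i\sigma_i^2 Z_i}$ together with the chi‑squared transform $\ex[e^{-s\chi^2_c}]=(1+2s)^{-c/2}$ gives
\[
\pr\!\left[\|\mat G\mat A\|_F^2 \le W/\tau^2\right] \;\le\; \exp\!\left(\frac{\theta W}{\tau^2} - \frac{c}{2}\sum_{i=1}^r \log\!\Bigl(1+\tfrac{2\theta\sigma_i^2}{c}\Bigr)\right).
\]
I would then use the elementary inequality $\log(1+x)\ge x-\tfrac12 x^2$ for $x\ge0$ (the difference has derivative $x^2/(1+x)\ge0$ and vanishes at $0$) on each logarithm, which collapses the exponent to $\frac{\theta W}{\tau^2} - \theta W + \frac{\theta^2}{c}\sum_i\sigma_i^4$; bounding $\sum_i\sigma_i^4 \le (\sum_i\sigma_i^2)^2 = W^2$ leaves the clean quadratic $-\theta W(1-\tau^{-2}) + \theta^2 W^2/c$ in $\theta$. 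Minimizing over $\theta>0$ (the minimizer is $\theta^\star = \tfrac{c}{2W}(1-\tau^{-2})$) yields exponent $-\tfrac{c}{4}(1-\tau^{-2})^2 = -\tfrac{c(\tau^2-1)^2}{4\tau^4}$, which is exactly the asserted bound since $\{\|\mat G\mat A\|_F\le\tau^{-1}\|\mat A\|_F\} = \{\|\mat G\mat A\|_F^2 \le \tau^{-2}\|\mat A\|_F^2\}$.

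None of these steps is deep; the only delicate point is arranging the chain of inequalities in the second paragraph so the constants match the statement exactly — in particular, checking that discarding the higher‑order terms via $\log(1+x)\ge x-\tfrac12 x^2$ and then using $\sum_i\sigma_i^4\le W^2$ is precisely what produces the factor $4\tau^4$ in the denominator rather than a weaker constant, and that the optimization over $\theta$ is unconstrained (the minimizer is positive for $\tau>1$). I would also remark that the rank‑$r$ hypothesis is used only to keep the sum finite, so the same argument applies to an arbitrary $\mat A\in\real^{m\times n}$.
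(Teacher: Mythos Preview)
Your argument is correct: the SVD reduction to independent scaled $\chi^2_c$ variables, the Chernoff bound, the inequality $\log(1+x)\ge x-\tfrac12 x^2$, the crude bound $\sum_i\sigma_i^4\le W^2$, and the optimization over $\theta$ all check out and land exactly on $-c(\tau^2-1)^2/(4\tau^4)$.

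There is nothing to compare against in the present paper, however: the authors do not prove \cref{thm:gratton-bounds} at all but simply quote it from \cite{gratton2018improved} and use it as a black box to derive the stopping-criterion corollary in \cref{subsec:stopping}. Your write-up is therefore a self-contained proof of a result the paper only cites. If you want to match the original source, Gratton--Titley-Peloquin's Theorem~3.1 is proved by essentially the same Laplace-transform route (reducing to a weighted chi-squared tail and applying a Cram\'er--Chernoff bound), so your approach is in line with theirs; the only cosmetic difference is that they work directly with the stable-rank quantity $\|\mat A\|_F^4/\|\mat A\|_2^4$ in place of your step $\sum_i\sigma_i^4\le W^2$, which is the same inequality in disguise.
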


Using the bound from \cite{gratton2018improved}, we wish to determine an adjustment to $\epsilon$ that will provide high-probability guarantees that the deviation of $\|\mat G\mat A\|_F$ from $\|\mat A\|_F$ does not result in IterativeCUR stopping before the desired quality of approximation has been reached. To do this, we allow the user to specify $\epsilon$, $\delta >1$ representing the maximum allowable deviation $\|\mat G \mat A\|_F$ from $\|\mat A\|_F$, and $\alpha \in (0,1)$ representing the probability that the $\delta$ deviation is not satisfied. Then combining this information with the bound of \cite{gratton2018improved}, we can derive the following corollary.

\begin{corollary}
    Given  $\delta >0$, $\epsilon >0$, and selecting $\alpha\in(0,1), c>0$ such that $c>-4\log(\alpha)$, then for matrices $A \in \real^{m \times n}$ and $\mat G \in \real^{c \times m}$ where $\mat G$ has independent mean zero, variance $1 / c$ Gaussian entries if  $\xi =  (1+\delta)\sqrt{1-2\sqrt{-\ln(\alpha)/c}} $ we have 

   \begin{equation}\label{eq:cor-prob}
       \pr\left[\frac{\|\mat G(\mat A-\mat{CUR})\|_F}{\|\mat{A}\|_F} < \xi \epsilon \ \bigcap \ \frac{\|\mat A-\mat{CUR}\|_F}{\|\mat A\|_F} > (1+\delta)\epsilon \right] \leq \alpha.
   \end{equation}
\end{corollary}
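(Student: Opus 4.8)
The plan is to reduce the joint event in \cref{eq:cor-prob} to the single lower-tail event controlled by \cref{thm:gratton-bounds}, applied to the residual matrix $\mat B = \mat A - \mat{CUR}$. First I would note that on the intersection event both inequalities hold at once: $\|\mat G\mat B\|_F < \xi\epsilon\|\mat A\|_F$ and $\|\mat B\|_F > (1+\delta)\epsilon\|\mat A\|_F$. Since the second of these makes $\|\mat B\|_F$ strictly positive (so no $0/0$ issue arises, and we may assume $\mat A \neq \mat 0$ without loss of generality), dividing the first bound by the second cancels the common factor $\epsilon\|\mat A\|_F$ and gives $\|\mat G\mat B\|_F / \|\mat B\|_F < \xi/(1+\delta)$. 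By the definition of $\xi$ this ratio equals $\sqrt{1 - 2\sqrt{-\ln(\alpha)/c}}$, which I will abbreviate as $1/\tau$ with $\tau = (1 - 2\sqrt{-\ln(\alpha)/c})^{-1/2}$.

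Next I would verify that $\tau$ is a legitimate input to \cref{thm:gratton-bounds}, i.e.\ that $\tau > 1$. The hypothesis $c > -4\log(\alpha)$ is exactly what forces $-\ln(\alpha)/c < 1/4$, hence $2\sqrt{-\ln(\alpha)/c} < 1$ and $0 < 1 - 2\sqrt{-\ln(\alpha)/c} < 1$; so the quantity under the square root is positive and $\tau > 1$. Consequently the intersection event is contained in $\{\|\mat G\mat B\|_F \le (1/\tau)\|\mat B\|_F\}$ (the strict inequality trivially implying the non-strict one), and \cref{thm:gratton-bounds}, applied with the matrix $\mat B$ whatever its rank, bounds the probability of this larger event by $\exp(-c(\tau^2-1)^2/(4\tau^4))$.

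It then remains to check that this bound is exactly $\alpha$. Setting $u = \sqrt{-\ln(\alpha)/c}$ so that $\tau^2 = (1-2u)^{-1}$, a short calculation gives $\tau^2 - 1 = 2u/(1-2u)$ and $\tau^4 = (1-2u)^{-2}$, whence $(\tau^2-1)^2/\tau^4 = 4u^2$ and the exponent reduces to $-cu^2 = -c\cdot(-\ln(\alpha)/c) = \ln(\alpha)$. Thus the right-hand side of the \cref{thm:gratton-bounds} estimate equals $\alpha$, and chaining the containment of events with this probability bound yields \cref{eq:cor-prob}.

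I do not expect a genuine obstacle in the computation itself: once the two inequalities on the intersection event are divided, everything reduces to the positivity check that makes $\tau > 1$ meaningful and to the algebraic collapse of the exponent to $\ln(\alpha)$. The one subtlety I would flag is modeling rather than technical: \cref{thm:gratton-bounds} treats the sketched matrix as fixed, whereas inside \icur{} the factors $\mat C,\mat U,\mat R$ depend on $\mat G\mat A$, so the clean argument above applies verbatim to a residual $\mat B$ independent of $\mat G$; making it fully rigorous for the algorithm's data-dependent residual would require, e.g., a union bound over the finitely many candidate index sets (inflating $\alpha$ by a combinatorial factor) or an appropriate conditioning argument.
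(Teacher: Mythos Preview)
Your proposal is correct and follows essentially the same route as the paper: both reduce the joint event to the lower-tail event $\|\mat G\mat B\|_F/\|\mat B\|_F < \xi/(1+\delta)$ by dividing the two inequalities, then invoke \cref{thm:gratton-bounds} and match the exponent to $\ln(\alpha)$. Your version is in fact more careful, since you explicitly verify that $c>-4\log(\alpha)$ is what makes $\tau>1$ legitimate and you carry out the algebra showing the bound collapses to exactly $\alpha$, whereas the paper simply states ``setting the right-hand side equal to $\alpha$ and solving''; your remark about the data-dependence of the residual on $\mat G$ is also a point the paper acknowledges only informally after the proof.
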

\begin{proof}
We begin by observing that we can bound \cref{eq:cor-prob}
    \begin{align}
        \notag \pr&\left[\frac{\|\mat G(\mat A-\mat{CUR})\|_F}{\|\mat A\|_F} < \xi \epsilon \ \bigcap \ \frac{\|\mat A-\mat{CUR}\|_F}{\|\mat A\|_F} > (1+\delta)\epsilon \right]\\
        \notag \\
        & \notag =\pr\left[\frac{\|\mat G(\mat A-\mat{CUR})\|_F\|\mat A-\mat{CUR}\|_F}{\|\mat A-\mat{CUR}\|_F\|\mat A\|_F} < \xi \epsilon \ \bigcap\  \frac{\|\mat A-\mat{CUR}\|_F}{\|\mat A\|_F} > (1+\delta)\epsilon \right]\\
        \notag \\
        & \notag  \leq \pr\left[\frac{\|\mat G(\mat A-\mat{CUR})\|_F}{\|\mat A-\mat{CUR}\|_F} < \xi/(1+\delta)\right],
    \end{align}
    where the final inequality comes from using the lower bound of $\|\mat A-\mat{CUR}\|_F / \|\mat A\|_F$. Now by letting $s = 1/\tau = \xi/(1+\delta)$ 
    applying the second bound from \cref{thm:gratton-bounds} we have
    \begin{equation}
         \pr\left[\frac{\|\mat G(\mat A-\mat{CUR})\|_F}{\|\mat A-\mat{CUR}\|_F} < \xi/(1+\delta)\right] \leq \exp\left( -\frac{cs^4 (1- 1/s ^2)^2}{4}\right)
    \end{equation}
    Setting the right-hand side equal to $\alpha$ and solving the resulting equation for $s$ gives the desired result.
\end{proof}
 This bound tells us that to prevent a problematic failure, a failure where $ \|\mat{A} - \mat{CUR}\|_F / \|\mat A\|_F> (1+\delta) \epsilon$, we should stop IterativeCUR using the threshold $\epsilon(1+\delta)\sqrt{1-2\sqrt{-\ln(\alpha)/c}}$. For this bound to be well defined, we need $c>-4\log(\alpha)$. This means that for every desired $\alpha$ there is a minimal block size that can be chosen to ensure the desired stopping control. For example, we can ensure an incorrect stopping probability of $10^{-10}$ by setting $c = 100$ and dividing $\epsilon$ by 4.98. In practice, using this adjustment at $c = 100$ is unnecessary (see \cref{sec:exp}). Additionally, although this bound is derived without accounting for the dependence between the residuals, this dependence is small enough where it can still provide a good representation of the randomness associated with the residual approximation.
    
\subsection{Computational Complexity and Memory Requirements}
We can now consider the computational complexity of IterativeCUR. In \cref{tab:comp-complex} we display a complete breakdown of the computational cost for the dense matrix case when LUPP is used for row and column selection. For these costs, $b$ is the block size and $c$ is the sketch size, which in our implementation is set to $c = 1.1b$. 

The primary costs of IterativeCUR are: (1) computing the column residual (Line \ref{alg:left-res} of \icur), (2) computing the row residual (Line \ref{alg:right-res} of \icur), (3) updating the pseudoinverse (Line \ref{alg:update} of \icur), and (4) computing the initial sketch (Line \ref{alg:initial-sketch} of \icur). 

The primary expense in the residual computation is the matrix-matrix multiplications, multiplying the $\mat G \mat C \mat U$ with $\mat R$ in the column residual case and multiplying $\mat C$ with $\mat U \mat R[:,J_k]$ in the row residual case. This cost can be substantially reduced when the $\mat C$ and $\mat R$ factors are sparse, which typically occurs when $\mat A$ is sparse. Furthermore, because matrix-matrix multiplication is a BLAS-3 operation, parallelization allows this operation to be executed in a substantially more scalable manner than the $\mathcal{O}((m + n)r^2)$ cost suggests.

When the residual computation is not the dominant cost, computing the pseudoinverse for the core matrix becomes the dominant cost. If this pseudoinverse is recomputed at every iteration, this cost is $\mathcal{O}(r^4)$. However, we can reduce this cost by recognizing that each update to the CUR appends blocks of rows and columns to the intersection matrix, which allows us to use Givens rotations-based updating of the pseudoinverse described in \cite{stewart1998matrix}. Such pseudoinverse updates only require a complexity of $\mathcal{O}((bk)^2)$ at each iteration $k$. In practice, we rarely see benefits from this updating procedure over the recomputation of the pseudoinverse because of LAPACK's highly efficient parallelization of the QR factorization \cite{buttari2008parallel}. 
The update to QR can still be beneficial when small block sizes are used or when the rank of the approximation is very high, e.g. $r > 10,000$ depending on the architecture of the system. When $\epsilon$ is not too small,
one could also improve the practicality of the pseudoinverse phase by computing the LU decomposition rather than the QR. The LU is typically more parallelizable and equally as accurate provided that the matrix is not too poorly conditioned, as would be the case when $\epsilon$ is not too small and the matrix is not exactly low-rank.

The $2bn(2m-1)$ cost of computing the sketch of the matrix $\mat A$ could be potentially problematic for very large $m$ and $n$. This problem can be reduced if we sketch using a sparse-sign sketch \cite{cohen2016nearly} or a Subsampled Randomized Trigonometric Transform sketch \cite{tropp2019streaming}. In our experiments, we used a Gaussian matrix because for small sketch sizes $b$, all sketching techniques have roughly the same complexity. 

In terms of storage, in the dense case, we need $mr$
storage for $\mat C_k$,  $r^2$ storage for $\mat U_k$, and $rn$ storage for $\mat R_k$. Additionally, we need $O(bn)$ space to store the sketch of $\mat A$ and the column residual. Finally, we need $O(bm)$ space to store the row residual. The small space required for the residuals allows for reduced data-movement costs, which lead to highly efficient pivoting on large matrices. When $\mat A$ is sparse, the storage for $\mat C_k,\mat R_k$ will be reduced accordingly.

	\begin{center}
		\begin{table}
            \label{tab:comp-complex}
\caption{Computational Complexity of IterativeCUR (\icur)}                     
		    \begin{tabular}{|c|c|}
                \hline
		        Action &  Total Cost\\
		        \hline
                Sketch & $1.1bn(2m-1)$\\
                LUPP for Cols & $rb (n - 1/2) + 7/6 r - rb^2 / 3$\\
                LUPP for Rows & $rb (m - 1/2) + 7/6 r - rb^2 / 3$\\
                Updated QR for U & $1/12 (r^3 (14 - 8/b) + r^2 (36b - 6/b) + r(43b^2 - 7b  -12))$\\
		        Col Residual & $c/(6b) (r^3 + r^2(6n - b + 3) + r(b^2 - 3) + bn$ \\
		        Row Residual & $1/6 (r^3 + r^2 (6m - b + 3) + r(b^2 - 3) + bm$ \\
		       {Norm of Col Residual} & {$2bn - 1$}\\
                \hline
                
	        \end{tabular}            
	    \end{table}
        
	\end{center}

\section{Understanding the Performance of IterativeCUR}\label{sec:the}
With an understanding of the algorithm, we now focus on the theoretical performance of IterativeCUR (\icur). 
In this section, 
we first describe the connections between IterativeCUR and existing algorithms in the literature to indicate why it works well in practice.  
We then derive a bound that provides some insight into the inner workings of IterativeCUR. 

\subsection{{Connections to existing algorithms}}\label{subsec:connection}
Adaptive pivoting strategies form a powerful class of methods for the column subset selection problem (CSSP). 
Possibly the first such algorithm was introduced by 
Deshpande and Rademacher~\cite{deshpande2006adaptive}, where a column is chosen with probability proportional to the squared column norm of the current residual with an interpolative decomposition, i.e., $\mat A-\mat{CC}^\dagger \mat A$. 
When $\mat A$ is positive semidefinite, significant simplifications ensue, resulting in the Randomly pivoted Cholesky algorithm, for which an efficient implementation and extensive theory have been developed in~\cite{chen2025randomly}. 

Let us now consider IterativeCUR from this viewpoint. Just like the adaptive sampling method of Deshpande-Vempala's, we update the sampling strategy based on the current residual. There are two notable differences: (i) Our goal is CUR, not $\mat{CC}^\dagger \mat{A}$ (which is significantly more expensive); accordingly our residual is  $\mat A-\mat{CUR}$ instead of $\mat A-\mat C\mat C^\dagger \mat A$. 
(ii) In IterativeCUR we find the pivots via (usually LUPP applied to) a sketched residual $\mat G(\mat{A}-\mat{CUR})$. As discussed above, this is a powerful strategy to find pivots for CSSP applied to the residual, and more efficient than computing the column norms of the residuals every time. 
Despite these differences, IterativeCUR can be seen as a variant of Deshpande-Vempala's method where CUR is the objective and a sketched pivoting strategy is used (which is empirically at least equally powerful) instead of norm-wise sampling. These close connections to existing adaptive sampling methods explain why IterativeCUR is expected to work, at least qualitatively.

\subsection{Error bound}\label{subsec:error}
With intuition for the expected behavior of IterativeCUR, we now derive an error bound for IterativeCUR. Deriving this bound relies on an index selection method proposed by \cite{osinsky2025close} as opposed to the QR with column pivoting (QRCP) and LU with partial pivoting (LUPP) approaches suggested in \cref{sec:alg}. We do so because \cite{osinsky2025close} has stronger and more interpretable theoretical bounds than the LUPP or QRCP approaches, and in practice performs only slightly better than the LUPP or QRCP selection approaches \cite{cortinovis2024adaptive}. 

To derive our bound, we rely on the cross-approximation result from Osinsky \cite{osinsky2025close}. This result applies to the scenario where the row and column indices are selected using \cite[Algorithm 1]{osinsky2025close} and a cross-approximation is used to compute the matrix $\mat U$, as we do in IterativeCUR (\icur). In this scenario, \cite{osinsky2025close} obtains
\begin{theorem}[Theorem 2 \cite{osinsky2025close}] \label{thm:osinsky}
    Let $\mat A,\mat Z\in \real^{m \times n}$ where $\mat Z$ is a rank-$r$ approximation to $\mat A$. Then it is possible to find columns indices, $J$, and row indices, $I$ of the matrix $\mat A$, such that 
    \begin{equation}
        \|\mat A - \mat A(:,J) \mat A(I,J)^\dagger \mat A(I,:)\|_F \leq (r+1) \|\mat A - \mat Z\|_F
    \end{equation}
    and that 
    \begin{equation}
        \|\mat A - \mat A(:,J) \mat A(I,J)^\dagger \mat A(I,:)\|_2 \leq \sqrt{1 + r(r+2)(\min(m,n) - r)} \|\mat A - \mat Z\|_2.
    \end{equation}
\end{theorem}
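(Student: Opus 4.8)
The plan is to convert the claim about the cross-approximation CUR error into a claim about a single Schur complement, reduce to the best rank-$r$ approximation, and then choose the indices by a volume-maximizing rule while estimating the Schur complement directly rather than by composing two projection bounds.

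First I would record the block form of the residual. After permuting the rows and columns of $\mat A$ so that the selected indices $I,J$ occupy the leading $r\times r$ block $\mat A_{11}=\mat A(I,J)$ (square and nonsingular in the regime of interest), partition $\mat A$ conformally, writing $\mat A_{12}=\mat A(I,J^c)$, $\mat A_{21}=\mat A(I^c,J)$, $\mat A_{22}=\mat A(I^c,J^c)$ with $I^c,J^c$ the complementary index sets. A direct computation gives
\begin{equation*}
  \mat A-\mat A(:,J)\,\mat A(I,J)^{\dagger}\,\mat A(I,:)=\begin{bmatrix}\mat 0 & \mat 0\\ \mat 0 & \mat A_{22}-\mat A_{21}\mat A_{11}^{-1}\mat A_{12}\end{bmatrix},
\end{equation*}
so the task reduces to choosing $r$ rows and $r$ columns for which the Schur complement $\mat A_{22}-\mat A_{21}\mat A_{11}^{-1}\mat A_{12}$ is small in the Frobenius, respectively spectral, norm. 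Two reductions are free. Since $\|\mat A-\mat A_r\|\le\|\mat A-\mat Z\|$ in every unitarily invariant norm and for every rank-$r$ $\mat Z$, it suffices to prove the bound with $\mat Z=\mat A_r$, the truncated SVD. And the residual equals $\mat\Pi\mat A$ for the oblique projector $\mat\Pi=\mat I-\mat C\,\mat A(I,J)^{-1}\mat P_I$, where $\mat C=\mat A(:,J)$ and $\mat P_I$ restricts to the rows in $I$; since $\mat\Pi\mat C=\mat 0$ we have $\mat\Pi\mat A=\mat\Pi\,(\mat A-\mat C\mat C^{\dagger}\mat A)$.

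Next I would fix the candidate index sets. For an existence statement the natural choice is to take $I,J$ so that $|\det\mat A(I,J)|$ is maximal among all $r\times r$ submatrices; equivalently one may first pick columns $J$ by a maximum-volume (or adaptive/volume-sampling) rule \cite{deshpande2006no,goreinov1997theory} and then pick rows $I$ dependently inside the chosen columns $\mat C$, exactly the dependent selection emphasized in \cref{sec:alg}. With $J$ fixed, the column step is controlled by the volume-sampling bound $\|\mat A-\mat C\mat C^{\dagger}\mat A\|_F\le\sqrt{r+1}\,\|\mat A-\mat A_r\|_F$ in the Frobenius case \cite{deshpande2006no} and by the classical maximum-volume interpolation bound in the spectral case \cite{goreinov1997theory}, while the dependent row step controls the conditioning of $\mat A(I,J)$ inside $\mat C$, hence $\mat\Pi$.

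The hard part, and the actual content of the theorem, is obtaining the constant $r+1$ (and the stated spectral factor) rather than the much larger $\sqrt{(r+1)\bigl(1+r(m-r)\bigr)}$ that the crude estimate $\|\mat A-\mat C\mat U\mat R\|_F\le\|\mat\Pi\|_2\,\|\mat A-\mat C\mat C^{\dagger}\mat A\|_F$ yields, since $\|\mat\Pi\|_2$ for an oblique projector built from a maximum-volume submatrix is only $\mathcal O(\sqrt{rm})$. To avoid this loss I would estimate the Schur complement entrywise through the determinantal identity
\begin{equation*}
  \bigl(\mat A_{22}-\mat A_{21}\mat A_{11}^{-1}\mat A_{12}\bigr)_{ij}=\frac{\det\mat A\bigl(I\cup\{i\},\,J\cup\{j\}\bigr)}{\det\mat A(I,J)},
\end{equation*}
expand the $(r+1)\times(r+1)$ numerator multilinearly (Cauchy--Binet) in $\mat A=\mat A_r+(\mat A-\mat A_r)$, and drop the purely-$\mat A_r$ term since $\mat A_r$ has rank $r$, so that every surviving term carries at least one factor from $\mat A-\mat A_r$ and a cofactor that is an $r\times r$ minor; the maximum-volume property of $\mat A(I,J)$ then caps the ratios of volumes that appear. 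Summing these entrywise bounds over the deleted rows and columns, and verifying that the column selection and the dependent row selection can coexist --- here one uses, as in \cref{sec:alg}, that $\mat S(:,J)=\mat 0$ so the row selection only ever examines the newly revealed columns --- is the delicate bookkeeping that I expect to be the main obstacle, and it is what ultimately collapses the estimate to $r+1$ in the Frobenius norm and to $\sqrt{1+r(r+2)(\min(m,n)-r)}$ in the spectral norm. This is essentially the line of argument carried out in \cite{osinsky2025close}.
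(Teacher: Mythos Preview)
The paper does not contain a proof of this theorem: it is stated verbatim as Theorem~2 of \cite{osinsky2025close} and is simply invoked as a black box in the proof of \cref{thm:converge}. There is therefore no proof in the paper to compare your proposal against.

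That said, your sketch is a reasonable outline of the classical maximum-volume route to cross-approximation bounds (the Schur-complement reduction and the bordered-determinant identity are correct and standard), but it is not a proof. You yourself flag the gap: after the Cauchy--Binet expansion, ``summing these entrywise bounds \ldots\ is the delicate bookkeeping that I expect to be the main obstacle.'' That bookkeeping is the entire content of the theorem. The entrywise maximum-volume bound $|(\mat A_{22}-\mat A_{21}\mat A_{11}^{-1}\mat A_{12})_{ij}|\le(r+1)\,\sigma_{r+1}(\mat A)$ does not by itself yield the Frobenius factor $r+1$; naively summing gives an extra $\sqrt{(m-r)(n-r)}$. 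Osinsky's argument is not a pure maximum-volume argument over all $r\times r$ submatrices of $\mat A$: it is a constructive greedy selection (his Algorithm~1) that works column-by-column on a fixed rank-$r$ approximation $\mat Z$ and controls the Frobenius error increment at each step, which is how the additive $(r+1)$ (rather than a multiplicative dimension-dependent) factor appears. Your proposal gestures at this by citing \cite{osinsky2025close} at the end, but the actual mechanism---the per-step error accounting tied to the specific greedy rule---is not present in what you wrote.
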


It is important to note that this bound applies for any rank-$r$ subspace. To make use of this result, we first derive a recursive relationship on the CUR residual. Then we repeatably apply \cref{thm:osinsky} to rank-$b$ subspace approximations to the residual at each iteration. For the residual recursion of CUR residual we have

\begin{lemma} \label{lemma:recurrence}
Let $\mat A \in \real^{m \times n}$, and let $\mat A_{I,J} = \mat A(:, J) \mat A(I,J)^\dagger \mat A(I,:)$ be the CUR approximation to the matrix $\mat A$ with row indices, $I$, and columns indices $J$. Then for some other set of row indices $I_+$ and $J_+$ we  have 
    \begin{equation}
        \mat{A} - \mat{A}_{I\cup I_+, J\cup J_+} = \mat{A} - \mat{A}_{I,J} - \mat{S}_{I_+,J_+} ( = \mat{S} - \mat{S}_{I_+,J_+})
    \end{equation} where $\mat{S} = \mat{A} - \mat{A}_{I,J}$ and $\mat{S}_{I_+,J_+} = \mat S(:, J_+) \mat S(I_+,J_+)^\dagger \mat S(I_+,:)$.
\end{lemma}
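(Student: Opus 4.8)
The plan is to reduce the statement to a single block–matrix identity. Since $\mat S = \mat A - \mat A_{I,J}$, the two displayed equalities are the same, and both are equivalent to
\[
\mat A_{I\cup I_+,\, J\cup J_+} \;=\; \mat A_{I,J} \;+\; \mat S_{I_+,J_+}.
\]
Every object in this identity (the index sets, the submatrices, the pseudoinverses) is equivariant under simultaneous permutations of rows and of columns, so I would first permute $\mat A$ so that $I,I_+$ are the leading rows and $J,J_+$ the leading columns, giving
\[
\mat A(I\cup I_+,\, J\cup J_+) = \begin{bmatrix} \mat A(I,J) & \mat A(I,J_+) \\ \mat A(I_+,J) & \mat A(I_+,J_+) \end{bmatrix}.
\]
As in the rest of IterativeCUR I take the intersection matrices $\mat A(I,J)$ and $\mat A(I\cup I_+, J\cup J_+)$ to be square and nonsingular, so all daggers are genuine inverses.

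Next I would record the two elementary facts $\mat S(:,J)=\mat 0$ and $\mat S(I,:)=\mat 0$: with $\mat A(I,J)$ invertible, $\mat A_{I,J}(:,J)=\mat A(:,J)\mat A(I,J)^{-1}\mat A(I,J)=\mat A(:,J)$, and symmetrically on the row side — this is exactly the computation already carried out in the discussion preceding the Row Selection subsection. A consequence is that $\mat S(I_+,J_+)$ equals the Schur complement $\mat M:=\mat A(I_+,J_+)-\mat A(I_+,J)\mat A(I,J)^{-1}\mat A(I,J_+)$ of $\mat A(I,J)$ in $\mat A(I\cup I_+,J\cup J_+)$, which is therefore nonsingular; moreover $\mat S(:,J_+)=\mat A(:,J_+)-\mat A(:,J)\mat A(I,J)^{-1}\mat A(I,J_+)$ and $\mat S(I_+,:)=\mat A(I_+,:)-\mat A(I_+,J)\mat A(I,J)^{-1}\mat A(I,:)$.

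The heart of the proof is then the standard $2\times 2$ block–inversion formula applied to $\mat A(I\cup I_+, J\cup J_+)^{-1}$ in terms of $\mat A(I,J)^{-1}$ and $\mat M^{-1}$, substituted into
\[
\mat A_{I\cup I_+,J\cup J_+} = \begin{bmatrix} \mat A(:,J) & \mat A(:,J_+) \end{bmatrix} \mat A(I\cup I_+, J\cup J_+)^{-1} \begin{bmatrix} \mat A(I,:) \\ \mat A(I_+,:) \end{bmatrix}.
\]
Expanding produces five terms: one is $\mat A(:,J)\mat A(I,J)^{-1}\mat A(I,:) = \mat A_{I,J}$, and the remaining four regroup — using the expressions for $\mat S(:,J_+)$ and $\mat S(I_+,:)$ above — into exactly $\mat S(:,J_+)\,\mat M^{-1}\,\mat S(I_+,:) = \mat S(:,J_+)\mat S(I_+,J_+)^{\dagger}\mat S(I_+,:) = \mat S_{I_+,J_+}$. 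Subtracting both sides from $\mat A$ yields the lemma.

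The only genuine obstacle is bookkeeping: one must verify that the four cross terms in the expansion collapse cleanly into the factored form $\mat S(:,J_+)\mat M^{-1}\mat S(I_+,:)$ with no leftover clutter, which is routine but slightly tedious. A shorter, less self-contained alternative — which I would mention in passing — is to note that the CUR residual always has block form $\begin{bmatrix} \mat 0 & \mat 0 \\ \mat 0 & \text{Schur complement}\end{bmatrix}$ (with rows ordered $(I,I^c)$ and columns $(J,J^c)$, and $I_+\subseteq I^c$, $J_+\subseteq J^c$) and to invoke the quotient identity for Schur complements (Haynsworth's quotient formula) to identify the Schur complement of $\mat S(I_+,J_+)$ in $\mat S$ with that of $\mat A(I\cup I_+,J\cup J_+)$ in $\mat A$; but I would carry out the direct expansion for concreteness.
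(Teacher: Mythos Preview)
Your proposal is correct and follows essentially the same route as the paper: reduce to $\mat A_{I\cup I_+,J\cup J_+}=\mat A_{I,J}+\mat S_{I_+,J_+}$, apply the $2\times 2$ block-inversion formula with Schur complement $\mat M$ (the paper calls it $\mat N$), and regroup the resulting five terms into $\mat A_{I,J}$ plus $(\mat A(:,J_+)-\mat A(:,J)\mat A(I,J)^{-1}\mat A(I,J_+))\,\mat M^{-1}\,(\mat A(I_+,:)-\mat A(I_+,J)\mat A(I,J)^{-1}\mat A(I,:))=\mat S_{I_+,J_+}$. The Haynsworth quotient-formula alternative you mention in passing is a genuinely different and more conceptual shortcut that the paper does not use, but your primary argument matches the paper's direct expansion.
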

\begin{proof}
First, by expanding $\mat{A}_{I\cup I_+, J\cup J_+}$ we get
    \begin{align*}
        \mat{A}_{I\cup I_+,J\cup J_+} = \onetwo{\mat{A}(:,J)}{\mat{A}(:,J_+)}\twotwo{\mat{A}(I,J)}{\mat{A}(I,J_+)}{\mat{A}(I_+,J)}{\mat{A}(I_+,J_+)}^\dagger \twoone{\mat{A}(I,:)}{\mat{A}(I_+,:)}.
    \end{align*} Now define $\mat{C} = \mat{A}(:,J),\mat{C}_0 = \mat{A}(:,J_+), \mat{R} = \mat{A}(I,:),\mat{R}_0 = \mat{A}(I_+,:), \mat{U}_{11} = \mat{A}(I,J),$ $\mat{U}_{10} = \mat{A}(I,J_+),\mat{U}_{01} = \mat{A}(I_+,J),\mat{U}_{00} = \mat{A}(I_+,J_+)$ for shorthand. Let us use the block inversion formula to expand the pseudoinverse term.
    \begin{equation}
        \twotwo{\mat{U}_{11}}{\mat{U}_{10}}{\mat{U}_{01}}{\mat{U}_{00}}^\dagger = \twotwo{\mat{U}_{11}^\dagger + \mat{U}_{11}^\dagger \mat{U}_{10} \mat{N}^{-1}\mat{U}_{01}\mat{U}_{11}^\dagger }{-\mat{U}_{11}^\dagger \mat{U}_{10} \mat{N}^\dagger}{-\mat{N}^\dagger \mat{U}_{01}\mat{U}_{11}^\dagger }{\mat{N}^\dagger}
    \end{equation} where $\mat{N} = \mat{A}(I_+,J_+) - \mat{A}(I_+,J)\mat{A}(I,J)^\dagger \mat{A}(I,J_+) = \mat{U}_{00} - \mat{U}_{01}\mat{U}_{11}^\dagger \mat{U}_{10}$. Therefore,
    \begin{align*}
        \mat{A}_{I\cup I_+,J\cup J_+}  &= \mat{C}\mat{U}_{11}^\dagger \mat{R} + \mat{C}\mat{U}_{11}^\dagger \mat{U}_{10}\mat{N}^\dagger \mat{U}_{01}\mat{U}_{11}^\dagger \mat{R} - \mat{C}_0 \mat{N}^\dagger \mat{U}_{01}\mat{U}_{11}^\dagger \mat{R}\\
        & \indent  - \mat{C}\mat{U}_{11}^\dagger \mat{U}_{10}\mat{N}^\dagger \mat{R}_0 + \mat{C}_0 \mat{N}^\dagger \mat{R}_0 \\
        &= \mat{C}\mat{U}_{11}^\dagger \mat{R} + (\mat{C}_0 - \mat{C}\mat{U}_{11}^\dagger \mat{U}_{10})\mat{N}^\dagger (\mat{R}_0 - \mat{U}_{01}\mat{U}_{11}^\dagger \mat{R}) \\
        &= \mat{A}_{I,J} + \mat{S}(:,J_+) \mat{S}(I_+,J_+)^\dagger \mat{S}(I_+,:) \\
        &= \mat{A}_{I,J} + \mat{S}_{I_+,J_+},
    \end{align*} which concludes the proof.
\end{proof}

\cref{lemma:recurrence} can naturally be thought of as a variant of a continuation of a block LU with complete pivoting procedure with the starting pivots $I$ and $J$ and to which a set of new pivots $I_+$ and $J_+$ is appended. Also, the CUR decomposition of $\mat{A}$ with $I\cup I_+$ and $J\cup J_+$ equals the CUR decomposition of $\mat{A}$ with $I$ and $J$ plus the CUR decomposition of the residual matrix $\mat{S} = \mat{A}-\mat{A}_{I,J}$ with $I_+$ and $J_+$. For a series of sets of indices $I_1,..., I_k$ and $J_1,..., J_k$, \cref{lemma:recurrence} implies
\begin{equation} \label{eq:recurrencenorm}
    \norm{\mat{A} - \mat{A}_{\bigcup\limits_{j=1}^k I_j,\bigcup\limits_{j=1}^k J_j}} = \prod_{i = 1}^k \frac{\norm{\mat{S}^{(i)} - \mat{S}_{I_i,J_i}^{(i)}}}{\norm{\mat{S}^{(i)}}}
\end{equation} where $\mat{S}^{(i)} = \mat{A} - \mat{A}_{\bigcup\limits_{j=1}^{i-1} I_j,\bigcup\limits_{j=1}^{i-1} J_j}$ and $\mat{S}^{(1)} = \mat{A}$.

As \cref{alg:ICUR} obtains a set of $b$ row and column indices per iteration, it allows us to establish the following result. Here, $b$ is the block size.

\begin{theorem} \label{thm:converge}
    Suppose we run $k$ iterations of \cref{alg:ICUR} with block size $b$ where we use Algorithm 1 in \cite{osinsky2025close} on the row sketch of the residual matrix $\mat{S}^{(i)}$ to obtain the row and column indices. Then we obtain a rank-$(bk)$ CUR approximation to $\mat{A}$ and the CUR decomposition satisfies
    \begin{equation} \label{eq:errbnd_attemp1}
        \norm{\mat{A} - \mat{A}_{\bigcup\limits_{j=1}^k I_j,\bigcup\limits_{j=1}^k J_j}}_F \leq (1+b)^k \prod\limits_{i = 1}^k \frac{\norm{\mat{S}^{(i)}-\mat{S}^{(i)}\mat{X}_i^\dagger \mat{X}_i}_F}{\norm{\mat{S}^{(i)}}_F}
    \end{equation} where $I_j$ and $J_j$ are the row and column indices obtained in the $j$th iteration of \cref{alg:ICUR}, and
    \begin{equation}
        \mat{X}_i = \mat{G}\mat{S}^{(i)} = \mat{G}\left(\mat{A} - \mat{A}_{\bigcup\limits_{j=1}^{i-1} I_j,\bigcup\limits_{j=1}^{i-1} J_j}\right).
    \end{equation} The matrix $\mat{G}\in \real^{b\times m}$ is a sketching matrix.
\end{theorem}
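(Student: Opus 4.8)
The plan is to combine the exact residual recursion of \cref{lemma:recurrence} — which already telescopes the error across all $k$ iterations, see \eqref{eq:recurrencenorm} — with one application of Osinsky's cross-approximation bound (\cref{thm:osinsky}) per iteration. Before the estimates I would dispose of the ``rank-$bk$'' bookkeeping: after $k$ iterations $I=\bigcup_{j=1}^{k}I_j$ and $J=\bigcup_{j=1}^{k}J_j$, and since the residual $\mat{S}^{(i)}$ vanishes identically on the rows in $\bigcup_{j<i}I_j$ and the columns in $\bigcup_{j<i}J_j$ (the computation $\mat{S}(I,:)=\mat 0$, $\mat{S}(:,J)=\mat 0$ carried out in the row-selection discussion), a selection driven by $\mat{S}^{(i)}$ never re-picks an already chosen index; hence each iteration contributes $b$ genuinely new rows and $b$ new columns, $|I|=|J|=bk$, and $\mat{A}_{I,J}$ has rank at most $bk$.

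The heart of the argument is a single per-iteration bound. Running the index selection ``on the row sketch of $\mat{S}^{(i)}$'' means feeding Osinsky's Algorithm~1 the matrix $\mat{S}^{(i)}$ together with the rank-$(\le b)$ approximation $\mat{Z}_i:=\mat{S}^{(i)}\mat{X}_i^{\dagger}\mat{X}_i$ — the orthogonal projection of the rows of $\mat{S}^{(i)}$ onto the row space of $\mat{X}_i=\mat{G}\mat{S}^{(i)}$, which comes in the natural factored form $(\mat{S}^{(i)}\mat{X}_i^{\dagger})(\mat{X}_i)$ and is precisely the rangefinder-type surrogate of $\mat{S}^{(i)}$ appearing in \eqref{thm:rangefinder}. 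Applying \cref{thm:osinsky} with $\mat{A}\leftarrow\mat{S}^{(i)}$, $\mat{Z}\leftarrow\mat{Z}_i$, and $r\leftarrow b$ (or $r\leftarrow\operatorname{rank}\mat{Z}_i\le b$, using $r+1\le b+1$) then yields the indices $I_i,J_i$ produced at that iteration and the estimate
\begin{equation*}
  \norm{\mat{S}^{(i)}-\mat{S}^{(i)}(:,J_i)\,\mat{S}^{(i)}(I_i,J_i)^{\dagger}\,\mat{S}^{(i)}(I_i,:)}_F \;\le\; (b+1)\,\norm{\mat{S}^{(i)}-\mat{S}^{(i)}\mat{X}_i^{\dagger}\mat{X}_i}_F ,
\end{equation*}
i.e.\ $\norm{\mat{S}^{(i)}-\mat{S}^{(i)}_{I_i,J_i}}_F\le(b+1)\,\norm{\mat{S}^{(i)}-\mat{S}^{(i)}\mat{X}_i^{\dagger}\mat{X}_i}_F$ in the notation of \cref{lemma:recurrence}.

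To conclude, I would feed this into the recursion. By \cref{lemma:recurrence} the iterates obey $\mat{S}^{(i+1)}=\mat{S}^{(i)}-\mat{S}^{(i)}_{I_i,J_i}$, so \eqref{eq:recurrencenorm} writes $\norm{\mat{A}-\mat{A}_{I,J}}_F$ as the telescoping product $\prod_{i=1}^{k}\norm{\mat{S}^{(i)}-\mat{S}^{(i)}_{I_i,J_i}}_F/\norm{\mat{S}^{(i)}}_F$ (each $\mat{S}^{(i)}\ne\mat 0$, otherwise the algorithm has already converged exactly and the bound is trivial). Substituting the per-iteration estimate into each of the $k$ numerators pulls out one factor $b+1$ apiece and delivers \eqref{eq:errbnd_attemp1}. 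I expect the genuine obstacle to be the step identifying ``Osinsky's algorithm on the sketched residual'' with Osinsky's procedure applied to the pair $(\mat{S}^{(i)},\mat{Z}_i)$ so that \cref{thm:osinsky} applies verbatim — this hinges on pinning down exactly which rank-$(\le b)$ surrogate the sketch furnishes and on checking the mild degenerate case $\operatorname{rank}\mat{Z}_i<b$. A secondary point worth stating plainly is that the theorem concerns the idealized variant described in its hypotheses (one joint Osinsky call with a $b\times m$ sketch), whereas \cref{alg:ICUR} as implemented uses a $\lfloor 1.1b\rfloor\times m$ sketch and does column- and row-selection in two separate sub-steps; the argument above establishes the bound for the former.
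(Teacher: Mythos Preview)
Your proposal is correct and follows essentially the same route as the paper: the paper's proof is the single sentence ``apply Theorem~2 of \cite{osinsky2025close} to \eqref{eq:recurrencenorm},'' and you have fleshed out precisely that argument, identifying the rank-$b$ surrogate $\mat Z_i=\mat S^{(i)}\mat X_i^{\dagger}\mat X_i$ to which \cref{thm:osinsky} is applied and handling the index bookkeeping. The caveats you flag (identification of the sketch-based $\mat Z_i$ with the input to Osinsky's procedure, the $\operatorname{rank}\mat Z_i<b$ edge case, and the discrepancy between the idealized variant in the hypothesis and the implemented \cref{alg:ICUR}) are genuine but go beyond what the paper's own proof addresses.
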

\begin{proof}
    The proof follows by applying result (14) in Theorem 2 of \cite{osinsky2025close} on \cref{eq:recurrencenorm}.
\end{proof}
\begin{remark}
    As $\mat{S}^{(i)}$ is the residual error matrix of the CUR decomposition with the indices $\bigcup_{j=1}^{i-1} I_j$ and $\bigcup_{j=1}^{i-1} J_j$ and hence has connections to the Schur complement, it is generally difficult to quantify the singular values of $\mat{S}^{(i)}$. Roughly, if the set of indices $\bigcup_{j=1}^{i-1} I_j$ and $\bigcup_{j=1}^{i-1} J_j$ are good we expect
\begin{equation}
    \norm{\mat{S}^{(i)}}_F \lesssim \sqrt{\sum_{\ell = b(i-1)+1}^n \sigma_\ell (\mat{A})^2}.
\end{equation} On the other hand, if we ignore the dependency of the sketching matrix $\mat{G}$ with the sets of indices $I_1,..,I_k$ and $J_1,...,J_k$, then $\norm{\mat{S}^{(i)}-\mat{S}^{(i)}\mat{X}_i^\dagger \mat{X}_i}_F$ is the randomized SVD error of $\mat{S}^{(i)}$ with target rank $b$. Therefore, we roughly have
\begin{equation}
    \norm{\mat{S}^{(i)}-\mat{S}^{(i)}\mat{X}_i^\dagger \mat{X}_i}_F \lesssim \sqrt{\sum_{\ell = b+1}^n \sigma_\ell \left(\mat{S}^{(i)} \right)^2} \lesssim \sqrt{\sum_{\ell = bi+1}^n \sigma_\ell (\mat{A})^2}.
\end{equation} If we cancel out the terms in the product \eqref{eq:errbnd_attemp1} in a telescoping way, we very roughly obtain
\begin{equation}
    \norm{\mat{A} - \mat{A}_{I,J}}_F \lesssim (1+b)^k \sqrt{\sum\limits_{\ell = bk+1}^n \sigma_\ell (\mat{A})^2} = (1+b)^k \norm{\mat{A} - \lowrank{\mat{A}}{bk}}_F.
\end{equation}
\end{remark}

In experiments, we have not observed any cases where IterativeCUR has a relative error worse than twice that of the error from the CUR produced using \cite{osinsky2025close}, even when testing Matlab's gallery matrices with slow spectral decay. This indicates that the suboptimality factor in \cref{thm:converge} is far more conservative than what is observed in practice.  Despite this conservativeness, \cref{thm:converge} provides intuition for IterativeCUR's worst-case performance if IterativeCUR behaved as a wholly greedy algorithm. Specifically, this analysis reveals that at every iteration, the index selection phase aims to capture the best rank-$b$ approximation that, when using \cite{osinsky2025close}, will be a $(b+1)$-suboptimal approximation in the worst case. What is not well-accounted for in this analysis is how using the residual facilitates the selection of indices that correct for suboptimal index selections from previous iterations. This correction seems to be why, as will be demonstrated in the next section, the net effect of these costs and benefits seems to be IterativeCUR performing the same as the sketched and pivoted techniques.

\section{Numerical Experiments}\label{sec:exp}
With an understanding of the details of IterativeCUR, we now examine its practical effectiveness through two broad types of experiments. Both experiments were performed on a 48 core, 2.6/4.0 GHz machine with 2 TB of memory. In the first type of experiment, we perform 10 runs of IterativeCUR\footnote{In the experiments we considered using the relative error with $\|\mat A\|_F$ in the denominator rather than $\|\mat G\mat A\|_F$. In general, using $\|\mat G \mat A\|_F$ is more than sufficient.} with a fixed stopping threshold and collect information about its accuracy, speed, and rank of approximation. In the second type of experiment, we perform five runs of IterativeCUR for each rank in a sequence of ranks and examine its median accuracy and median computation time. The first set of experiments gives us an indication of how IterativeCUR performs when a solution of a specified tolerance is desired. The second set of experiments indicates how effectively IterativeCUR collects indices.

We compare the performance of IterativeCUR using LUPP column and row index selection with two methods, rank-adaptive Randomized SVD (SVDsketch), proposed in \cite{anderson2016efficient} \footnote{ Can be called using \texttt{svdsketch} in MATLAB}, and sketched LU with Partial Pivoting (s-LUPP), proposed in \cite{dong2023simpler}. By virtue of being an approximation of the SVD of $\mat A$, SVDSketch provides a good baseline for the best approximation performance at a particular rank that a rank-adaptive technique can achieve. S-LUPP, which is a very high-quality CUR method in terms of computational time and accuracy (but is not rank-adaptive), provides us with an effective baseline for how accurate an effectively chosen CUR can be on a particular problem.  This comparison is particularly important because as CUR is constrained to selecting subsets of rows and columns, the best possible error of a CUR approximation at a particular rank, $r$, can be up to a factor of $r+1$ worse than the truncated SVD \cite{osinsky2025close}. 

For our experiments, we first consider how well IterativeCUR performs in its intended use, returning an approximation of a desired quality. Here we use the first experiment type to compare IterativeCUR, s-LUPP, and SVDSketch to each other in terms of accuracy, computational time, and rank when run to a specific tolerance. Then, with an understanding of how well IterativeCUR performs in its designed context, we next investigate how efficiently IterativeCUR selects columns by comparing the accuracy of each method's approximation at fixed ranks. We close this section with investigations into the effects that index selection method and block size have on approximation accuracy of IterativeCUR.

\subsection{Threshold Examples} \label{subsec:threshold}
We first examine the performance of IterativeCUR in the rank-adaptive regime. Here we consider approximating the two square matrices displayed in \cref{tab:rank-adapt}. The first matrix is the \textsc{Low-Rank} matrix, which is exactly low-rank and is formed by generating two standard Gaussian matrices, $\mat{G}_i \in \real^{30000 \times 2000}, i \in \{1,2\}$, and setting $\mat A = \mat{G}_1 \mat{G}_2^\top$. The second matrix is the \textsc{c-67} matrix from the SuiteSparse library \cite{davis2011university}. 
\begin{table}[h]
    \centering
    \caption{Matrix information for fixed rank experiment.} \label{tab:rank-adapt}
    \begin{tabular}{c|c|c|c}
    \toprule
    Matrix & Dimension & Sparse & Tolerance \\
   \hline
    \textsc{Low-Rank} & 30,000 & False & $1\times 10^{-6}$ \\
     \textsc{c-67} & 57,975 & True & $1\times10^{-2}$ \\
    \end{tabular}
  \end{table}

In the experiment, we run each method (IterativeCUR, s-LUPP, SVDSketch), 10 times. SVDSketch and IterativeCUR were run with a block size of 250 and stopped when the relative error reached the threshold specified in \cref{tab:rank-adapt}. The results of these experiments are displayed in \cref{fig:c-67,fig:low_rank_comp} as box-and-whisker plots where the middle line represents the median, the outer lines on the box represent the 25th and 75th quartiles, and the outer endpoints represent the minimum and maximum. 

\begin{figure}[h]
    \centering
    \begin{tikzpicture}[scale = \boxplotscale, anchor = west]
	\pgfplotstableread[col sep=comma]{./csvs/synthetic/Low_Rank_30000_250_0_overall_accuracy.csv}\accdata
    \pgfplotstableread[col sep=comma]{./csvs/synthetic/Low_Rank_30000_250_0_n_cols.csv}\coldata
    \pgfplotstableread[col sep=comma]{./csvs/synthetic/Low_Rank_30000_250_0_time.csv}\timedata
    \newcommand{\mattype}{\textsc{Low-Rank} }
    
    \pgfplotscreateplotcyclelist{cross_colors}{cLU, cILU, cSVDs}
	\begin{axis}[
         width = .4 \textwidth,
        height = .4 \textwidth,
        name = axis1,
		boxplot/draw direction = x,
		x axis line style = {opacity=0},
		axis x line* = bottom,
		axis y line = left,
		enlarge y limits,
		xmajorgrids,
        title = {Accuracy for \mattype},
		ytick = {1, 2, 3, 4},
		yticklabel style = {align=center, font=\small, rotate=0},
		yticklabels = {\curs, \icurl, \svds},
		ytick style = {draw=none}, 
		xlabel = {$\frac{\|A - \hat{A}_k\|_F}{\|A\|_F}$},
        cycle list name=cross_colors,
	]
		\foreach \n in {1,...,3} {
			\addplot+[boxplot, fill, draw, draw opacity = \dopa, fill opacity = \fopa] table[y index=\n] {\accdata};
		}
	\end{axis}
        	\begin{axis}[
        width = .4 \textwidth,
        height = .4 \textwidth,
        name = axis2,
        at={(axis1.outer north east)},
        anchor=outer north west,
        xshift = 1 cm,
		boxplot/draw direction = x,
		x axis line style = {opacity=0},
		axis x line* = bottom,
		axis y line = left,
		enlarge y limits,
		xmajorgrids,
        title = {Rank for \mattype},
		ytick = {1, 2, 3, 4},
		yticklabel style = {align=center, font=\small, rotate=0},
        yticklabel = \empty,
		ytick style = {draw=none}, 
		xlabel = {Rank},
        cycle list name=cross_colors,
	]
		\foreach \n in {1,...,3} {
			\addplot+[boxplot, fill, draw, draw opacity = \dopa, fill opacity = \fopa] table[y index=\n] {\coldata};
		}
	\end{axis}
            	\begin{axis}[
		 width = .4 \textwidth,
        height = .4 \textwidth,
		boxplot/draw direction = x,
         at={(axis2.outer north east)},
         xshift = 1 cm,
        anchor=outer north west,
		x axis line style = {opacity=0},
		axis x line* = bottom,
		axis y line = left,
		enlarge y limits,
		xmajorgrids,
        title = {Time for \mattype},
		ytick = {1, 2, 3, 4},
		yticklabel style = {align=center, font=\small, rotate=0},
        yticklabel = \empty,
		ytick style = {draw=none}, 
		xlabel = {Time (seconds)},
        cycle list name=cross_colors
	]
		\foreach \n in {1,...,3} {
			\addplot+[boxplot, fill, draw, draw opacity = \dopa, fill opacity = \fopa] table[y index=\n] {\timedata};
		}
	\end{axis}
\end{tikzpicture}
    \caption{Box plots for the performance of SVDsketch (SVDs), s-LUPP, and Iterative LUPP applied to a \textsc{low-rank} matrix in \cref{tab:rank-adapt}. The left plot displays the accuracy of the approximation, the middle plot displays the rank of the approximation, and the right plot displays the computational time.}
    \label{fig:low_rank_comp}
\end{figure}
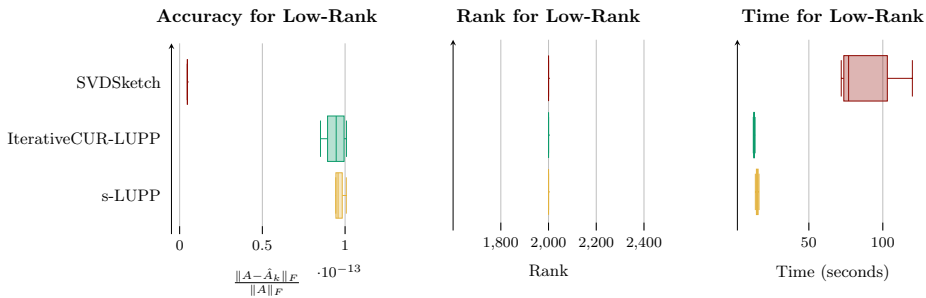

\begin{figure}[h]
    \centering
    \begin{tikzpicture}[scale = \boxplotscale, anchor = west]
	\pgfplotstableread[col sep=comma]{./csvs/synthetic/c-67_57975_250_0_overall_accuracy.csv}\accdata
    \pgfplotstableread[col sep=comma]{./csvs/synthetic/c-67_57975_250_0_n_cols.csv}\coldata
    \pgfplotstableread[col sep=comma]{./csvs/synthetic/c-67_57975_250_0_time.csv}\timedata
    \newcommand{\mattype}{\textsc{c-67} }
    
    \pgfplotscreateplotcyclelist{cross_colors}{cLU, cILU, cSVDs}
	\begin{axis}[
        width = .4 \textwidth,
        height = .4 \textwidth,
        name = axis1,
		boxplot/draw direction = x,
		x axis line style = {opacity=0},
		axis x line* = bottom,
		axis y line = left,
		enlarge y limits,
		xmajorgrids,
        title = {Accuracy for \mattype},
		ytick = {1, 2, 3, 4},
		yticklabel style = {align=center, font=\small, rotate=0},
		yticklabels = {\curs, \icurl, \svds},
		ytick style = {draw=none}, 
		xlabel = {$\frac{\|A - \hat{A}_k\|_F}{\|A\|_F}$},
        cycle list name=cross_colors,
	]
		\foreach \n in {1,...,3} {
			\addplot+[boxplot, fill, draw, draw opacity = \dopa, fill opacity = \fopa] table[y index=\n] {\accdata};
		}
	\end{axis}
        	\begin{axis}[
        width = .4 \textwidth,
        height = .4 \textwidth,
        name = axis2,
        at={(axis1.outer north east)},
        anchor=outer north west,
        xshift = 1 cm,
		boxplot/draw direction = x,
		x axis line style = {opacity=0},
		axis x line* = bottom,
		axis y line = left,
		enlarge y limits,
		xmajorgrids,
        title = {Rank for \mattype},
		ytick = {1, 2, 3, 4},
		yticklabel style = {align=center, font=\small, rotate=0},
        yticklabel = \empty,
		ytick style = {draw=none}, 
		xlabel = {Rank},
        cycle list name=cross_colors,
	]
		\foreach \n in {1,...,3} {
			\addplot+[boxplot, fill, draw, draw opacity = \dopa, fill opacity = \fopa] table[y index=\n] {\coldata};
		}
	\end{axis}
            	\begin{axis}[
        width = .4 \textwidth,
        height = .4 \textwidth,
		boxplot/draw direction = x,
         at={(axis2.outer north east)},
         xshift = 1 cm,
        anchor=outer north west,
		x axis line style = {opacity=0},
		axis x line* = bottom,
		axis y line = left,
		enlarge y limits,
		xmajorgrids,
        title = {Time for \mattype},
		ytick = {1, 2, 3, 4},
		yticklabel style = {align=center, font=\small, rotate=0},
        yticklabel = \empty,
		ytick style = {draw=none}, 
		xlabel = {Time (seconds)},
        cycle list name=cross_colors
	]
		\foreach \n in {1,...,3} {
			\addplot+[boxplot, fill, draw, draw opacity = \dopa, fill opacity = \fopa] table[y index=\n] {\timedata};
		}
	\end{axis}
\end{tikzpicture}
    \caption{Box plots for the performance of SVDsketch (SVDs), s-LUPP, and Iterative LUPP applied to a \textsc{c-67} matrix in \cref{tab:rank-adapt}. The left plot displays the accuracy of the approximation, the middle plat displays the rank of the approximation, and the right plot displays the computational time.}
    \label{fig:c-67}
\end{figure}
In the experiments for the \textsc{Low-Rank} matrix we can observe that all three methods return an approximation with the original matrix's rank, 2000, and are far more accurate than the desired threshold, $1\times 10^{-6}$, with the median accuracy for SVDSketch being an order of magnitude more accurate at $4 \times 10^{-15}$ than the CUR approaches (both $9 \times 10^{-14}$). The major difference between the methods is observed in the timings where IterativeCUR is the fastest method with a median time of 12 seconds, slightly faster than s-LUPP's median time of 15 seconds, and about 7 times faster than SVDSketch's median time of 85 seconds. Overall, from this experiment, we can see in the easy-to-handle exactly low-rank case, IterativeCUR performs effectively. 

When we make the experiment more difficult by considering a large sparse matrix \textsc{c-67} matrix, we still see that IterativeCUR performs well. Here, the stopping threshold is $1\times 10^{-2}$, and we observe that IterativeCUR finds solutions that are slightly more accurate than SVDSketch and similarly as accurate as s-LUPP with the added advantage of being less variable than s-LUPP. Further, IterativeCUR uses only a median time of 2.6 seconds to find this solution as opposed to the median time of 3.9 seconds for s-LUPP, and the median time of 41 seconds for SVDSketch. Overall, we can see that IterativeCUR can perform highly effectively in the rank-adaptive context.

\subsection{Efficiency of Selection}\label{subsec:fixed_rank}
While it is clear that IterativeCUR can effectively form approximations of a single specified quality, it is unclear whether IterativeCUR can efficiently form such approximations for an arbitrary rank. To examine the efficiency of the approximation, we compare the speed and accuracy of IterativeCUR's approximation with the s-LUPP and SVDSketch approximations of fixed ranks $\{500,1000,1500,2000,2500, 3000, 3500, 4000\}$ for the matrices listed in \cref{tab:fixed-rank}. The \textsc{Low-Rank PD} matrix is generated the same way as the \textsc{low-rank} matrix in \cref{subsec:threshold} with the addition of a diagonal matrix containing on its diagonal the vector $(1-\exp({\log(30,000)}/{\log(5)})^{1:30000}$. The \textsc{G7Jac200} and \textsc{Bayer01} matrices come from SuiteSparse \cite{davis2011university}.  We run each method, matrix, and rank combination five times and display the median runtime and accuracy in \cref{fig:fixed_rank_low_rank_pd},  \cref{fig:fixed_rank_g7jac}, and \cref{fig:fixed_rank_bayer01}. For each run, IterativeCUR and SVDSketch are run with a block size of 250.
\begin{table}[h]
    \centering
    \caption{Matrix information for fixed-rank experiment.} \label{tab:fixed-rank}
    \begin{tabular}{c|c|c}
    \toprule
    Matrix & Dimension & Sparse \\
   \hline
    \textsc{Low-Rank PD} & 30,000 & False\\
     \textsc{G7Jac200} & 57,310 & True\\
     \textsc{Bayer01} & 57,735 &True\\
    \end{tabular}
  \end{table}

From these results, we can see that the median relative errors of the approximations formed by IterativeCUR are as small, if not smaller, than the median s-LUPP relative error at every rank. Additionally, the IterativeCUR relative error typically is only at most twice the relative error of SVDSketch.  

In terms of runtime, IterativeCUR is much faster than SVDSketch. In the best case, IterativeCUR is 36 times faster than SVDSketch when estimating a rank-4000 approximation for the \textsc{G7Jac200} matrix, and in the worst case, IterativeCUR is still six times faster than SVDSketch for the rank 500 approximations to the \textsc{G7Jac200} and \textsc{Low-Rank PD} matrices.  This time difference is much smaller for s-LUPP with IterativeCUR being only four times faster in the best case, rank 4000 approximation to \textsc{Low-Rank PD}, and providing no speed-up in the worst cases, the rank 500 approximations for the \textsc{G7Jac200} and \textsc{Bayer01} matrices.  Compared to both alternatives, it is also important to note that the benefits are greatest for the densest matrix, \textsc{Low-Rank PD}, where the memory load is the highest. This seems to suggest that on much larger matrices with larger memory loads, the benefits afforded by a single small sketch for IterativeCUR should lead to even greater performance improvements over these alternatives.  

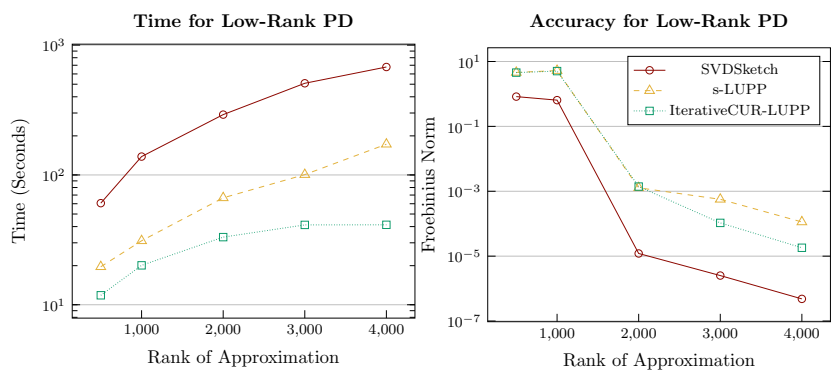
\begin{figure}[h]
    \centering
    \begin{tikzpicture}[scale = \boxplotscale]
    \newcommand{\matname}{Low_Rank_PD}
    \newcommand{\Matname}{\textsc{Low-Rank PD}}
    \pgfplotstableread[col sep=comma]{./csvs/same_rank/\matname_250_accuracy.csv}\accdata
    \pgfplotstableread[col sep=comma]{./csvs/same_rank/\matname_250_time.csv}\timedata
    
    \begin{axis}[
        name = axis1,
        ylabel = {Time (Seconds)},
        xlabel = {Rank of Approximation},
        title = {Time for \Matname},
        ymode = log,
        ymajorgrids,
        legend pos = south east,
        scale only axis,
        xlabel near ticks,
        ylabel near ticks,
        ]
        \addplot[svds_line]table[x = n_cols, y = svds]\timedata;
        \addplot[scur_line]table[x = n_cols, y = LUPP_sketch_cur]\timedata;
        \addplot[icur_line]table[x = n_cols, y = LUPP_iterative_cur]\timedata;
    \end{axis}
    
    \begin{axis}[
        name = axis2,
        at={(axis1.outer north east)},
        anchor=outer north west,
        ylabel = {Froebinius Norm},
        xlabel = {Rank of Approximation},
        title = {Accuracy for \Matname},
        ymode = log,
        ymajorgrids,
        legend pos = north east,
        scale only axis,
        xlabel near ticks,
        ylabel near ticks,
        ]
        \addplot[svds_line]table[x = n_cols, y = svds]\accdata;
        \addplot[scur_line]table[x = n_cols, y = LUPP_sketch_cur]\accdata;
        \addplot[icur_line]table[x = n_cols, y = LUPP_iterative_cur]\accdata;
        \legend{\svds, \curs, \icurl};
    \end{axis}
\end{tikzpicture}
    \caption{This plot shows the change in median relative error and median runtime time for \svds, \icurl, and \curs with approximations of varying rank for a \textsc{Low-Rank PD} matrix.}
    \label{fig:fixed_rank_low_rank_pd}
\end{figure}
\begin{figure}[h]
    \centering
    \begin{tikzpicture}[scale = \boxplotscale]
    \newcommand{\matname}{g7jac200}
    \newcommand{\Matname}{\textsc{G7Jac200}}
    \pgfplotstableread[col sep=comma]{./csvs/same_rank/\matname_250_accuracy.csv}\accdata
    \pgfplotstableread[col sep=comma]{./csvs/same_rank/\matname_250_time.csv}\timedata
    
    \begin{axis}[
        name = axis1,
        ylabel = {Time (Seconds)},
        xlabel = {Rank of Approximation},
        title = {Time for \Matname},
        ymode = log,
        ymajorgrids,
        legend pos = south east,
        scale only axis,
        xlabel near ticks,
        ylabel near ticks,
        ]
        \addplot[svds_line]table[x = n_cols, y = svds]\timedata;
        \addplot[scur_line]table[x = n_cols, y = LUPP_sketch_cur]\timedata;
        \addplot[icur_line]table[x = n_cols, y = LUPP_iterative_cur]\timedata;
    \end{axis}
    
    \begin{axis}[
        name = axis2,
        at={(axis1.outer north east)},
        anchor=outer north west,
        ylabel = {Froebinius Norm},
        xlabel = {Rank of Approximation},
        title = {Accuracy for \Matname},
        ymode = log,
        ymajorgrids,
        legend pos = north east,
        scale only axis,
        xlabel near ticks,
        ylabel near ticks,
        ]
        \addplot[svds_line]table[x = n_cols, y = svds]\accdata;
        \addplot[scur_line]table[x = n_cols, y = LUPP_sketch_cur]\accdata;
        \addplot[icur_line]table[x = n_cols, y = LUPP_iterative_cur]\accdata;
        \legend{\svds, \curs, \icurl};
    \end{axis}
\end{tikzpicture}
    \caption{This plot shows the change in median relative error and median runtime time for \svds, \icurl, and \curs{} with approximations of varying rank for a \textsc{G7Jac200} matrix.}
    \label{fig:fixed_rank_g7jac}
\end{figure}
\begin{figure}[h]
    \centering
    \begin{tikzpicture}[scale = \boxplotscale]
    \newcommand{\matname}{bayer01}
    \newcommand{\Matname}{\textsc{Bayer01}}
    \pgfplotstableread[col sep=comma]{./csvs/same_rank/\matname_250_accuracy.csv}\accdata
    \pgfplotstableread[col sep=comma]{./csvs/same_rank/\matname_250_time.csv}\timedata
    
    \begin{axis}[
        name = axis1,
        ylabel = {Time (Seconds)},
        xlabel = {Rank of Approximation},
        title = {Time for \Matname},
        ymode = log,
        ymajorgrids,
        legend pos = south east,
        scale only axis,
        xlabel near ticks,
        ylabel near ticks,
        ]
        \addplot[svds_line]table[x = n_cols, y = svds]\timedata;
        \addplot[scur_line]table[x = n_cols, y = LUPP_sketch_cur]\timedata;
        \addplot[icur_line]table[x = n_cols, y = LUPP_iterative_cur]\timedata;
    \end{axis}
    
    \begin{axis}[
        name = axis2,
        at={(axis1.outer north east)},
        anchor=outer north west,
        ylabel = {Froebinius Norm},
        xlabel = {Rank of Approximation},
        title = {Accuracy for \Matname},
        ymode = log,
        ymajorgrids,
        legend pos = north east,
        scale only axis,
        xlabel near ticks,
        ylabel near ticks,
        ]
        \addplot[svds_line]table[x = n_cols, y = svds]\accdata;
        \addplot[scur_line]table[x = n_cols, y = LUPP_sketch_cur]\accdata;
        \addplot[icur_line]table[x = n_cols, y = LUPP_iterative_cur]\accdata;
        \legend{\svds, \curs, \icurl};
    \end{axis}
\end{tikzpicture}
    \caption{This plot shows the change in median relative error and median runtime time for \svds, \icurl, and \curs{} with approximations of varying rank for a \textsc{Bayer01} matrix.}
    \label{fig:fixed_rank_bayer01}
\end{figure}
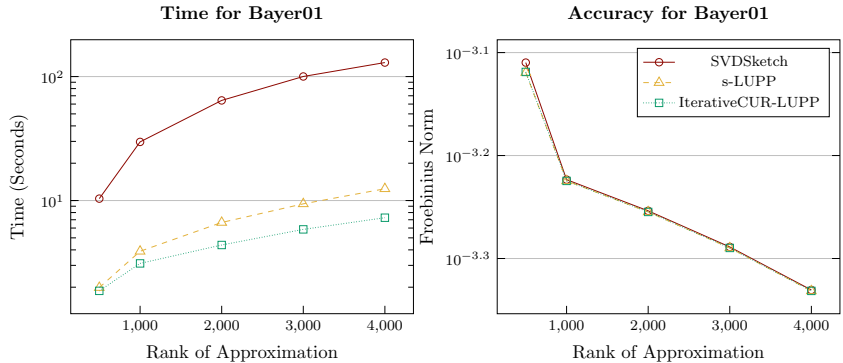

\subsection{Selection Methods}
Its clear that IterativeCUR with LUPP index selection is highly effective, it is next useful to consider whether using different index selection methods will have an impact on the performance. For this experiment, we consider the approximation quality of IterativeCUR when indices are selected with QRCP \cite{martinsson2017householder}, LUPP \cite{dong2023simpler}, and Osinsky\footnote{The Osinsky implementation selects columns with Osinsky and rows with QRCP.} \cite{osinsky2025close} for a \textsc{Lehmer} matrix with $1,000$ rows and columns from MATLAB's gallery and a  \textsc{Low-Rank PD}\footnote{For the diagonal matrix we take only the first 100 entries from the vector defined in \cref{subsec:fixed_rank}. Additionally, instead of rank $2,000$ we use a rank of $100$ for the two Gaussian matrices.} matrix with $1,000$ rows and columns. We consider each method's approximation performance with block size $50$ at ranks $\{150, 350, 550, 750,950\}$. For a baseline, we also consider the approximation performance of a truncated SVD and SVDSketch approximations. As we did not have an optimized implementation of Osinsky's selection approach, we did not consider the runtime, although it should be noted that Osinsky's is substantially slower than the alternatives.

We display the results of the experiment in \cref{fig:rand_svd_method_comp}. In both experiments, the LUPP and QRCP versions of IterativeCUR perform the same. The Osinsky method outperforms the other two selection approaches for the \textsc{Low-Rank PD} matrix, but is outperformed by the LUPP and QRCP approaches in the \textsc{Lehmer} matrix. Although initially it may be surprising that the best index selection approach does not always result in the best IterativeCUR approximation, one can make sense of this behavior by thinking about the conscientiously greedy interpretation of IterativeCUR discussed in \cref{sec:the}. Under this interpretation, at each iteration sketching the residual reveals the approximate dominant $b$-dimensional subspace of the residual. If this subspace contains enough information to make effective global index selections, then choosing the best possible indices will work exceptionally well. However, if the subspace does not contain enough information to make the great global selections, then choosing the best indices for this $b$-dimensional subspace could mean poor global index selection because the index selection is more aggressive than the information quality warrants. This seems to be what occurs in these experiments, although it is very computationally expensive to truly verify whether the $b$-dimensional space contains enough information for effective global index selection in the \textsc{Low-Rank PD} case but not in the \textsc{Lehmer} case. Although, we can hypothesize this to be the case owing to \textsc{Lehmer} having a slower spectral decay than \textsc{Low-Rank PD}. What is remarkable is that even with this potentially greedy behavior in the \textsc{Lehmer} case, the worst-case approximation of the three methods is not substantially worse than the other two. This lends credence to the conscientiously greedy interpretation of IterativeCUR where making selections using the residual limits the down-side risk that one would observe from a wholly greedy approach.

Overall, when one considers that in general LUPP is about twice as fast as QRCP and about three times as fast as Osinsky, it seems clear that the best selection approach to choose in most scenarios is LUPP. If one desires greater accuracy, Osinsky may result in better approximations and may be worth trying; however, as a default, we suggest the use of sketched LUPP in IterativeCUR.

\begin{figure}[h]
    \centering
    \begin{tikzpicture}[scale = \boxplotscale]
    \newcommand{\matname}{lehmer}
    \newcommand{\Matname}{Lehmer }
    \pgfplotstableread[col sep=comma]{./csvs/selection_methods/\matname_1000_50_med.csv}\accdata
    
    \begin{axis}[
        name = axis1,
        ylabel = {$\frac{\|A - \hat{A}_k\|_F}{\|A\|_F}$},
        xlabel = {Rank of Approximation},
        title = {Index Selection for \textsc{\Matname}},
        ymode = log,
        ymajorgrids,
        legend pos = south west,
        scale only axis,
        xlabel near ticks,
        ylabel near ticks,
        ]
        \addplot[svd_line]table[x = rank, y = svd]\accdata;
        \addplot[svds_line]table[x = rank, y = svds]\accdata;
        \addplot[qr_line]table[x = rank, y = qr]\accdata;
        \addplot[icur_line]table[x = rank, y = lupp]\accdata;
        \addplot[os_line]table[x = rank, y = os]\accdata;
        \legend{SVD, \svds, QRCP, LUPP, Osinsky};
    \end{axis}
\end{tikzpicture}
    \begin{tikzpicture}[scale = \boxplotscale]
    \newcommand{\matname}{Low_Rank_PD}
    \newcommand{\Matname}{\textsc{Low Rank PD} }
    \pgfplotstableread[col sep=comma]{./csvs/selection_methods/\matname_1000_50_med.csv}\accdata
    
    \begin{axis}[
        name = axis1,
        ylabel = {$\frac{\|A - \hat{A}_k\|_F}{\|A\|_F}$},
        xlabel = {Rank of Approximation},
        title = {Index Selection for \Matname},
        ymode = log,
        ymajorgrids,
        legend pos = south west,
        scale only axis,
        xlabel near ticks,
        ylabel near ticks,
        ]
        \addplot[svd_line]table[x = rank, y = svd]\accdata;
        \addplot[svds_line]table[x = rank, y = svds]\accdata;
        \addplot[qr_line]table[x = rank, y = qr]\accdata;
        \addplot[icur_line]table[x = rank, y = lupp]\accdata;
        \addplot[os_line]table[x = rank, y = os]\accdata;
        \legend{SVD, \svds, QRCP, LUPP, Osinsky};
    \end{axis}
\end{tikzpicture}
    \caption{The performance of IterativeCUR using \cite{osinsky2025close} (Osinsky), QR with column pivoting (QRCP), and LU with partial pivoting (LUPP) as the column and row selection strategies. }
    \label{fig:rand_svd_method_comp}
\end{figure}
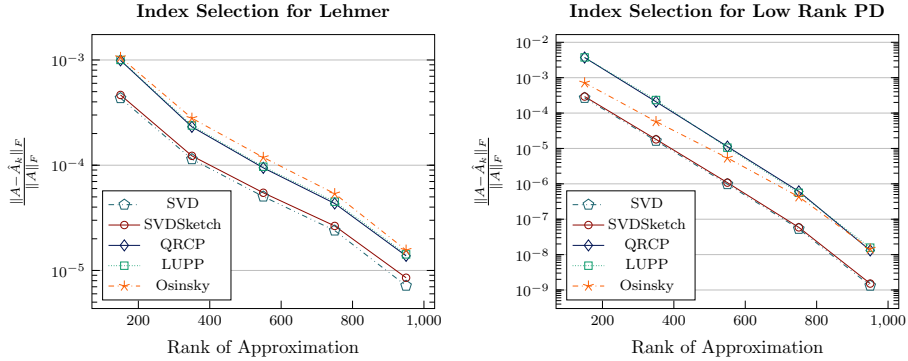

\subsection{Block Size}
In addition to index selection strategies, another important aspect of IterativeCUR is the effect that block size has on runtime and accuracy. For this experiment, we compare IterativeCUR with LUPP index selection at block sizes 5, 50, 100, and 500 on the \textsc{Bayer01} and \textsc{Low-Rank PD} matrices from \cref{tab:fixed-rank} at ranks $\{500,1000,1500,2000,2500\}$.

\begin{figure}[h]
    \centering
    \begin{tikzpicture}[scale = \boxplotscale]
    \newcommand{\matname}{bayer01}
    \newcommand{\Matname}{\textsc{Bayer01} }
    \pgfplotstableread[col sep=comma]{./csvs/block_size/\matname_accuracy.csv}\accdata
    \pgfplotstableread[col sep=comma]{./csvs/block_size/\matname_time.csv}\timedata
    
    \begin{axis}[
        name = axis1,
        ylabel = {Time (Seconds)},
        xlabel = {Rank of Approximation},
        title = {Time for \Matname Across Block Sizes},
        ymajorgrids,
        legend pos = south east,
        scale only axis,
        xlabel near ticks,
        ylabel near ticks,
        ]
        \addplot[bs5_line]table[x = n_cols, y = bs_5]\timedata;
        \addplot[bs50_line]table[x = n_cols, y = bs_50]\timedata;
        \addplot[bs100_line]table[x = n_cols, y = bs_100]\timedata;
        \addplot[bs500_line]table[x = n_cols, y = bs_500]\timedata;
    \end{axis}
    
    \begin{axis}[
        name = axis2,
        at={(axis1.outer north east)},
        anchor=outer north west,
        ylabel = {$\frac{\|A - \hat{A}_k\|_F}{\|A\|_F}$},
        xlabel = {Rank of Approximation},
        title = {Accuracy for \Matname Across Block Sizes},
        ymajorgrids,
        legend pos = north east,
        scale only axis,
        xlabel near ticks,
        ylabel near ticks,
        ]
        \addplot[bs5_line]table[x = n_cols, y = bs_5]\accdata;
        \addplot[bs50_line]table[x = n_cols, y = bs_50]\accdata;
        \addplot[bs100_line]table[x = n_cols, y = bs_100]\accdata;
        \addplot[bs500_line]table[x = n_cols, y = bs_500]\accdata;
        \legend{5, 50, 100, 500};
    \end{axis}
\end{tikzpicture}
    \caption{Comparison of time and accuracy of IterativeCUR for block sizes 5, 50, 100, and 500 for a \textsc{Bayer01} matrix. The left plot shows median runtime over five runs while the left plot shows median relative error over five runs.}
    \label{fig:bayer01_bs}
\end{figure}
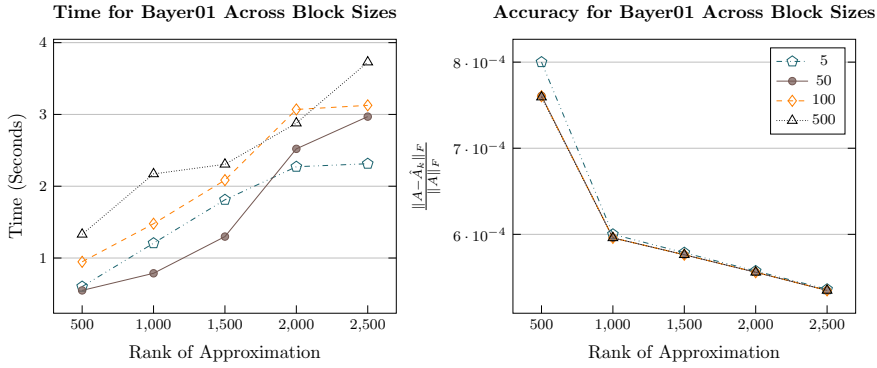
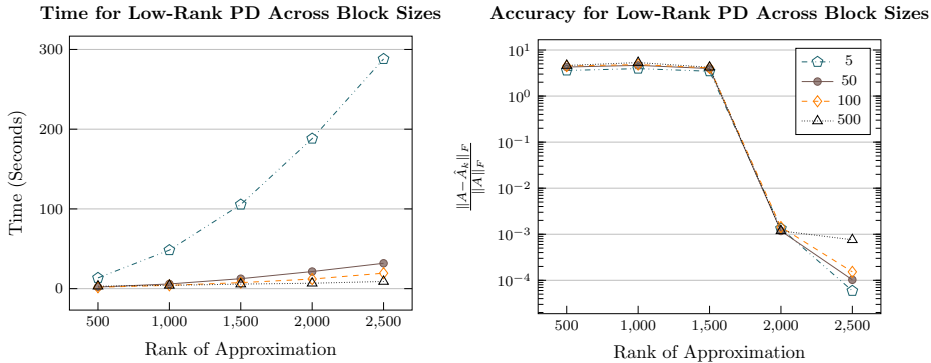
\begin{figure}[h]
    \centering
    \begin{tikzpicture}[scale = \boxplotscale]
    \newcommand{\matname}{low_Rank_PD}
    \newcommand{\Matname}{\textsc{Low-Rank PD} }
    \pgfplotstableread[col sep=comma]{./csvs/block_size/\matname_accuracy.csv}\accdata
    \pgfplotstableread[col sep=comma]{./csvs/block_size/\matname_time.csv}\timedata
    
    \begin{axis}[
        name = axis1,
        ylabel = {Time (Seconds)},
        xlabel = {Rank of Approximation},
        title = {Time for \Matname Across Block Sizes},
        ymajorgrids,
        legend pos = south east,
        scale only axis,
        xlabel near ticks,
        ylabel near ticks,
        ]
        \addplot[bs5_line]table[x = n_cols, y = bs_5]\timedata;
        \addplot[bs50_line]table[x = n_cols, y = bs_50]\timedata;
        \addplot[bs100_line]table[x = n_cols, y = bs_100]\timedata;
        \addplot[bs500_line]table[x = n_cols, y = bs_500]\timedata;
    \end{axis}
    
    \begin{axis}[
        name = axis2,
        at={(axis1.outer north east)},
        anchor=outer north west,
        ylabel = {$\frac{\|A - \hat{A}_k\|_F}{\|A\|_F}$},
        xlabel = {Rank of Approximation},
        title = {Accuracy for \Matname Across Block Sizes},
        ymode = log,
        ymajorgrids,
        legend pos = north east,
        scale only axis,
        xlabel near ticks,
        ylabel near ticks,
        ]
         \addplot[bs5_line]table[x = n_cols, y = bs_5]\accdata;
        \addplot[bs50_line]table[x = n_cols, y = bs_50]\accdata;
        \addplot[bs100_line]table[x = n_cols, y = bs_100]\accdata;
        \addplot[bs500_line]table[x = n_cols, y = bs_500]\accdata;
        \legend{5, 50, 100, 500};
    \end{axis}
\end{tikzpicture}
    \caption{Comparison of time and accuracy of IterativeCUR for block sizes 5, 50, 100, and 500 for a \textsc{Low-Rank PD} matrix. The left plot shows median runtime over five runs while the left plot shows median relative error over five runs.}
    \label{fig:low_rank_pd_bs}
\end{figure}

The results of these experiments are shown in \cref{fig:bayer01_bs} and \cref{fig:low_rank_pd_bs}. By first looking at the runtime we can see that for the sparse \textsc{Bayer01} matrix the choice of block size does not seem to have a substantial impact on runtime. When we consider the dense \textsc{Low-Rank PD} matrix, decreasing block size leads to slower performance. This makes sense because provided the matrix is not too big, large block sizes allow LAPACK to better exploit efficient blocked operations. 

In terms of accuracy, we see that, in the \textsc{Bayer01} matrix case, larger block sizes offer very small improvements in accuracy. Alternatively, for the \textsc{Low-Rank PD} matrix a smaller block size seems to lead to more accurate solutions, with a block size of 5 having about a 10 times lower median relative error than a block size of 500 and half the error as the block size of 50. These results align with the conscientiously greedy interpretation of IterativeCUR where smaller block sizes would be expected to produce approximations that are more accurate than the approximations produced by larger block sizes. 

Overall, this experiment suggests that the block size should be chosen based on the user's balancing of potential computational accelerations from blocking operations and the user's intuition on the rank of the matrix. When the user suspects that the matrix has a rank less than 100, then potentially choosing a block size as small as 5 may be sufficient. However, when the user expects the rank to be in the 1000s, moderate block sizes, around 200, should be chosen to best capitalize on the benefits of blocked LAPACK operations.

\section{Conclusion}\label{sec:con}
IterativeCUR is an efficient rank-adaptive method for computing CUR approximations. Because IterativeCUR's reuse of the sketch allows it to form high-quality approximations of matrices with only a small number of matrix vector multiplications, it is exceptionally well suited for forming approximations to large matrices. Additionally, by incorporating risk-aware stopping criteria based on aposteriori estimators, IterativeCUR provides high-probability guarantees of approximation quality that a user can adjust to align with their confidence in their chosen stopping threshold. Further, while previous algorithms such as SVDSketch could only guarantee a relative accuracy of $\sqrt{\epsilon_{\rm mach}}$, IterativeCUR can easily handle tolerances very close to $\epsilon_{\rm mach}$. (This is achieved by directly estimating the norm of the residual, rather than relying on an indirect formula that expresses the error as a difference of two squared norms.)
In experiments, we have shown that IterativeCUR is highly competitive, matching and at times exceeding a state of the art CUR method such as sketched LUPP, both in terms of accuracy and performance. Despite IterativeCUR's strong practical performance, work remains to be done to close the gap between the current conservative theoretical bounds and the observed results.

\bibliographystyle{siamplain}
\bibliography{bib/references}
\end{document}